\documentclass[11pt]{amsart}
\usepackage[left=1in,right=1in,bottom=0.5in,top=0.8in]{geometry}
\usepackage{amsfonts}
\usepackage{amsmath, amssymb}
\usepackage{graphicx} 
\usepackage{epstopdf}
\usepackage[colorlinks, linkcolor=black, urlcolor=blue]{hyperref}
\usepackage{xcolor}
\usepackage{amsthm}
\usepackage{listings}
\usepackage{longtable}
\usepackage{mathrsfs}
\usepackage[utf8]{inputenc}  
\usepackage[T1]{fontenc}
\usepackage{indentfirst} 
\usepackage{enumerate} 
\usepackage{fancyhdr} 
\usepackage{verbatim} 
\usepackage{float} 
\usepackage{mathtools}
\mathtoolsset{showonlyrefs}
\usepackage[numbers, sort]{natbib}


\newtheorem{teo}{Theorem}

\theoremstyle{definition} 

\newtheorem{lemma}[teo]{Lemma}

\newtheorem{example}{Example}

\newcommand{\B}{B^n}
\newcommand{\RR}{(\mathbb{R}^n)}

\DeclareMathOperator{\Conv}{Conv}
\DeclareMathOperator{\I}{I} 
\DeclareMathOperator{\dom}{dom}
\DeclareMathOperator{\interior}{int}
\DeclareMathOperator{\MA}{MA}
\DeclareMathOperator{\MAn}{MA,h}
\DeclareMathOperator{\Hess}{D^2}
\DeclareMathOperator{\epi}{epi}
\DeclareMathOperator{\CD}{Conv_{bd}}
\DeclareMathOperator{\lip}{Conv_{lip}}
\DeclareMathOperator{\lipd}{Conv^*_{lip}}
\newcommand{\finite}{(\mathbb{R}^n; \mathbb{R})}

\DeclareMathOperator{\supp}{supp}
\newcommand{\conc}{{\mbox{\rm Conc}([0,\infty))}}
\newcommand{\cvx}{{\mbox{\rm Cvx}([0,\infty))}}
\newcommand{\F}{\cF_{\Phi}(X)}
\newcommand{\seq}{\xrightarrow{\tau}}
\newcommand{\dseq}{\xrightarrow{\tau^*}}
\renewcommand{\d}{\,\mathrm{d}}
\newcommand{\oZ}{\operatorname{Z}}
\newcommand{\R}{{\mathbb R}}
\newcommand{\cK}{{\mathcal K}}
\DeclareMathOperator{\bd}{bd}
\newcommand{\oom}{C}

\newcommand{\fconvx}{{\mbox{\rm Conv}(\R^n)}} 
\newcommand{\fconvdl}{{\mbox{\rm Conv}_{{\rm ld}}(\R^n)}} 
\newcommand{\fconvdld}{{\mbox{\rm Conv}^*_{{\rm ld}}(\R^n)}} 
\newcommand{\fconvf}{{\mbox{\rm Conv}(\R^n; \R)}}

\newcommand{\cF}{{\mathcal F}}

\author{Fernanda Moreira Baêta  and Monika Ludwig}
\address{Institut für Diskrete Mathematik und Geometrie, Technische Universität Wien, Wiedner Hauptstr.\  8-10/1046, 1040 Wien, Austria}
\subjclass[2020]{26B25,  52A41, 26B15, 28A33, 28A25}
\title[]{On the Semicontinuity of Functionals on Function Spaces}

\begin{document}

\begin{abstract}
Results on the upper and lower semicontinuity of functionals defined on spaces of convex and more general functions are established. In particular, the following result is obtained.
Let  $\phi(v; \cdot)$ be the density of the absolutely continuous part of a Radon measure $\Phi(v; \cdot)$ 
associated to a function $v\colon X\rightarrow \mathbb{R}$ defined on the topological measure space $(X,\lambda)$. For concave $\zeta\colon [0, \infty)\rightarrow[0,\infty)$ with $\lim_{t\to 0} \zeta(t)=0$ and $\lim_{t\to\infty}\zeta(t)/t= 0$, it is shown that   the functional $v\mapsto\int_{X} \zeta(\phi(v;x))\d\lambda(x)$ depends upper   semicontinuously on $v$. Examples include 
functional affine surface areas for convex functions.
\end{abstract}
\maketitle

\section{Introduction}
 For a non-empty, compact, convex $K\subset \R^{n+1}$, define the affine surface area as
\begin{equation}\label{affine_surface_area}
     \int_{\bd K} \kappa(K,x)^{\frac{1}{n+2}}\d x,
\end{equation}
where $\kappa(K,x)$ is the generalized Gaussian curvature of $K$ at $x\in\bd K$, the boundary of $K$, and $\d x$ stands for integration with respect to the $n$-dimensional Hausdorff measure on the boundary of $K$. Let $\cK^{n+1}$ denote the space of convex bodies (non-empty, compact, convex sets) in $\R^{n+1}$ equipped with the topology induced by the Hausdorff metric. Since affine surface area vanishes on polytopes, it is not continuous on $\cK^{n+1}$. However, as shown by Lutwak \cite{Lutwak91}, affine surface area is an upper semicontinuous functional on $\cK^{n+1}$. The upper semicontinuity of a wider class of curvature functionals was established in \cite{Ludwig:curvature, Lutwak92}.

Let $D$ be a  bounded convex domain of $\mathbb{R}^n$
and  $v\in C^2(D)$ convex. The affine surface area of the epi-graph of $v$ is  
\begin{align}\label{ab}
  \int_{D} \det (\Hess v(x))^{\frac{1}{n+2}}\d x,
\end{align}
where $\Hess v(x)$ is the Hessian matrix of $v$ at $x$ and  $\d x$ stands for integration with respect to $n$-dimensional Lebesgue measure. 
By a classical theorem of Aleksandrov (see \cite[Theorem 6.4.1]{EvansGariepy}), 
a convex function $v$ is twice differentiable almost everywhere, and the integral in \eqref{ab} is well-defined (but possibly infinite).

In [\citealp{Trudinger:Wang2000}, Lemma 6.4], Trudinger and Wang show that the functional defined in \eqref{ab} is upper semicontinuous for a given bounded convex domain $D$ with respect to locally uniform convergence, using that  $\det (\Hess v)$ is  the density of the absolutely continuous part of the Monge--Amp\`ere measure of $v$ and the weak$^*$-continuity of  Monge--Amp\`ere measures (see Section \ref{tools}). For further definitions of functional affine surface area, see, for example, \cite{AKSW, Caglar, LSW, 
STTW}.

\goodbreak
This paper aims to generalize this result to a specific class of extended real-valued convex functions  without restricting to a bounded set $D$. We also extend the result to more general functionals. The main motivation comes from the quest for classification results of valuations on convex functions. Such classification results have been recently established for continuous valuations on convex functions (see, for example,  \cite{Colesanti-Ludwig-Mussnig-4, Colesanti-Ludwig-Mussnig-5}). On convex bodies, they are classical for continuous valuations (see \cite[Chapter 6]{Schneider:CB2}) and have also been established for upper semicontinuous valuations in certain cases (see \cite{Ludwig:semi, Ludwig:affinelength, Ludwig:Reitzner, Ludwig:Reitzner2}).

Let $\fconvx$ be the space of 
extended real-valued functions 
$u\colon\R^n\to(-\infty, \infty]$ that are convex and lower semicontinuous and such that $u\not\equiv\infty$.
The (effective) {domain} of $u$ is $\dom (u)=\{x\in\R^n: u(x)<\infty\}$ and we write $\interior$ for interior.
We consider the subspace 
\begin{equation}
 \fconvdl=
\{u\in \fconvx\colon \dom (u) \mbox{ is compact},\, u \mbox{ is Lipschitz on $\interior \dom (u)$}\}.
\end{equation}
We use epi-convergence on $\fconvx$ and its subspaces, and we also use so-called $\tau$-convergence  on $\fconvdl$ (see Section \ref{tools}).

First, we state the following result, which proves in particular the $\tau$-upper semi\-continuity of the functional defined in \eqref{ab} on $\fconvdl$. Let $\conc$ be the set of concave functions $\zeta\colon [0,\infty)\to [0,\infty)$ such that  $\lim_{t\to 0}\zeta(t)=0$ and $\lim_{t\to\infty} \zeta(t)/t=0$.

\begin{teo}\label{teo1p}
For  $\zeta\in \conc$ and $u\in\fconvdl$,
\begin{align*}
    \oZ(u)=\int_{\dom(u)} \zeta(\det(\Hess u(x)))\d x
\end{align*}
is finite, and 
\begin{equation}
    \oZ(u)\ge \limsup_{k\to \infty} \oZ(u_k)
\end{equation}
for every sequence of functions $u_k$ in $\fconvdl$ with uniformly bounded Lipschitz constants on their domains
that epi-converges to $u\in \fconvdl$.
\end{teo}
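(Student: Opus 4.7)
The plan is to combine the weak$^*$-continuity of the Monge--Amp\`ere operator, recalled in Section 2, with a Jensen--partition argument that exploits the concavity and sublinear growth of $\zeta$. Fix $L<\infty$ with $\operatorname{Lip}(u_k)\le L$ for all $k$; by the epi-convergence all domains $\dom(u_k)$ eventually lie in a common compact set $K\subset\R^n$, and $u$ itself is $L$-Lipschitz on $\interior\dom(u)$. Since $\partial u(x)\subset B(0,L)$ at interior points, we obtain $\MA(u)(\interior\dom u)\le \omega_n L^n$, where $\omega_n$ denotes the volume of the unit ball, and in particular $\det(\Hess u)$ is integrable over $\dom(u)$. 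The concavity of $\zeta$ together with $\zeta(t)/t\to 0$ yields, for every $\varepsilon>0$, a constant $C_\varepsilon$ with $\zeta(t)\le C_\varepsilon+\varepsilon t$ on $[0,\infty)$; integrating this bound over the bounded set $\dom(u)$ gives $\oZ(u)\le C_\varepsilon\,|\dom(u)|+\varepsilon\,\omega_n L^n<\infty$, which settles finiteness (and, applied to each $u_k$, gives uniform finiteness of $\oZ(u_k)$).

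For the semicontinuity inequality, recall that $\det(\Hess v)\,dx$ is exactly the absolutely continuous part of $\MA(v)$ for any $v\in\fconvdl$, while epi-convergence together with the uniform Lipschitz bound implies the weak$^*$-convergence $\MA(u_k)\to\MA(u)$ (Section~2). For any cube $Q$ with $\MA(u)(\partial Q)=0$, Jensen's inequality applied to the nondecreasing concave $\zeta$ (monotonicity follows from concavity with $\zeta\ge 0$ and $\zeta(0)=0$) yields
\[
\int_Q \zeta(\det(\Hess u_k))\,dx \;\le\; |Q|\,\zeta\!\Big(\tfrac{1}{|Q|}\int_Q \det(\Hess u_k)\,dx\Big) \;\le\; |Q|\,\zeta\!\Big(\tfrac{\MA(u_k)(Q)}{|Q|}\Big).
\]
Fixing a finite partition $\mathcal P$ of $K$ into such cubes, summing and letting $k\to\infty$ using the weak$^*$-convergence and continuity of $\zeta$ at each finite value gives
\[
\limsup_{k\to\infty}\oZ(u_k)\;\le\;\sum_{Q\in\mathcal P}|Q|\,\zeta\!\Big(\tfrac{\MA(u)(Q)}{|Q|}\Big).
\]

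The final step, which I expect to be the main obstacle, is to send the mesh of $\mathcal P$ to zero and show that the right-hand side converges to $\oZ(u)=\int_{\dom(u)}\zeta(\det(\Hess u(x)))\,dx$. Decomposing $\MA(u)=\det(\Hess u)\,dx+\mu_s$ with $\mu_s$ singular, the Lebesgue differentiation theorem gives $\MA(u)(Q)/|Q|\to\det(\Hess u(x))$ as $Q$ shrinks to a point $x$ outside $\supp\mu_s$, and the linear majorant $\zeta(t)\le C_\varepsilon+\varepsilon t$ provides a dominating function against which one can apply dominated convergence. On cubes absorbing the singular part of $\MA(u)$ the ratio $\MA(u)(Q)/|Q|$ blows up, but the sublinear growth $\zeta(t)/t\to 0$ forces $|Q|\,\zeta(\MA(u)(Q)/|Q|)\to 0$ for each such cube as the partition is refined, so the singular part contributes nothing in the limit. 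Sending $\varepsilon\to 0$ at the end yields $\limsup_k\oZ(u_k)\le\oZ(u)$, completing the argument. This Jensen--partition scheme is precisely the abstract upper semicontinuity principle announced in the abstract, applied here with $\Phi(u;\cdot)=\MA(u)$ and $\lambda$ Lebesgue measure.
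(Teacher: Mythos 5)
Your outline follows the right philosophy (Jensen plus partition plus weak$^*$-continuity), and the finiteness argument at the start is fine, but there is a genuine gap in the semicontinuity step, and it concerns precisely the point that the paper's duality lemma is designed to handle. For $u\in\fconvdl$, the Monge--Amp\`ere measure $\MA(u;\cdot)=V_n(\partial u(\cdot))$ is \emph{not} a Radon measure on any neighbourhood of $\bd\dom(u)$: at boundary points $x\in\bd\dom(u)$ the subdifferential $\partial u(x)$ contains an unbounded normal cone, so for any cube $Q$ meeting $\bd\dom(u)$ one typically has $\MA(u;Q)=\infty$ (in one dimension, for $u=|\cdot|+\I_{[-1,1]}$ the set $\partial u([1/2,1])$ is the ray $[1,\infty)$, which has infinite length). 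The same applies to each $u_k$. Consequently the claimed weak$^*$-convergence $\MA(u_k)\to\MA(u)$ on the compact $K$ and the implied bound $\limsup_k \MA(u_k)(Q)\le\MA(u)(Q)$ cannot hold for cubes $Q$ in your partition of $K$ that touch any of the domain boundaries -- those quantities are identically $+\infty$. Since $\zeta\in\conc$ need not be bounded (take $\zeta(t)=\sqrt t$), the terms $|Q|\,\zeta(\MA(u;Q)/|Q|)$ coming from such cubes are $+\infty$, so the refinement argument at the end does not close; your discussion of ``cubes absorbing the singular part'' implicitly assumes $\MA(u;Q)$ stays bounded as the mesh shrinks, which is true for interior singular mass but false for the boundary. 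You would need an extra argument that separates off a boundary strip, controls the contribution of each $\oZ(u_k)$ there via the linear majorant and the uniform bound $\int\det\Hess u_k\,dx\le\omega_n L^n$, and then runs the cube argument only on cubes compactly contained in $\interior\dom(u)$; as written this ingredient is missing.

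The paper avoids this issue entirely by passing to the dual. Under the Legendre transform, $\fconvdl$ is mapped onto $\Conv_{\MA}(\R^n;\R)$, the space of finite-valued convex functions whose Monge--Amp\`ere measures are honest compactly supported Radon measures (Lemma \ref{duality}), and the $\tau$-convergence with uniform Lipschitz bounds dualises to epi-convergence with uniformly bounded supports of the Monge--Amp\`ere measures. The identity \eqref{conjugate} converts the functional on $u$ into one on $u^*$ with $\zeta$ replaced by $\tilde\zeta(t)=t\,\zeta(1/t)\in\conc$, at which point the abstract upper semicontinuity theorem (Theorem \ref{mainb}, proved with a Lusin-theorem-plus-range-partition argument rather than your cube partition, but with the same Jensen heart) applies directly. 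You correctly identified the abstract principle but applied it in the primal picture, where the relevant measures fail to be Radon near the boundary; either dualise first, or supply the missing boundary-strip estimate.
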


\noindent
We remark that the restriction to sequences with uniformly bounded Lipschitz constants is necessary (see Example \ref{example}). 

We will prove Theorem \ref{teo1p} and its generalizations in the dual setting obtained by using the Legendre transform. In the dual setting, we  look at convex functions $v\colon \R^n\to \R$ whose Monge--Amp\`ere measures have compact supports and at sequences  $v_k: \R^n\to \R$ with uniformly bounded supports of their Monge--Amp\`ere measures. We establish the results in greater generality for densities of the absolutely continuous part of Radon measures associated to functions on topological spaces in Theorem \ref{main} and also provide results on lower semicontinuity in Theorem~\ref{lower}.

\section{Preliminaries}\label{tools}

We collect results on Radon measures and on convex functions.

\subsection{Radon Measures on Metric Spaces}

Let $(X, \lambda)$ be a metric space with a (non-negative) $\sigma$-finite Radon measure $\lambda$. We will consider $\sigma$-finite Radon measures depending on certain functions on $X$  and use the Radon--Nikodým and Lebesgue decomposition theorems  (see, for example,  [\citealp{EvansGariepy}, Theorems  1.6.2 and 1.6.3]).

\goodbreak
Let $\cF(X)$ be a space of functions $v\colon X\to \R$ equipped with a notion of convergence.
For $v\in \cF(X)$, let $\Phi(v;\cdot)$ be a Radon measure depending on $v$ defined on Borel sets of $X$  such that we have weak$^*$-star convergence of $\Phi(v_k;\cdot)$ to $\Phi(v;\cdot)$ as the sequence $v_k$ converges to $v\in\cF(X)$. Here, we say that $\Phi(v_k; \cdot)$ is {weak$^*$-convergent} to $\Phi(v; \cdot)$ if for any sequence $v_k$ converging in $\cF(X)$ to $v$, we have
\begin{align*}
    \lim_{k\rightarrow \infty} \int_X \beta(x)\d\Phi(v_k;x)= \int_X \beta(x)\d\Phi(v;x)
\end{align*}
for every function $\beta\in C_c(X)$, the set of continuous functions with compact support on $X$. We need the following simple result. For a sequence of functions $v_k\in \cF(X)$ converging to $v$, the weak$^*$-convergence of $\Phi(v_k; \cdot)$ to $\Phi(v; \cdot)$ implies  that
\begin{align}\label{weak-star}
  \limsup_{k\rightarrow \infty} \Phi(v_k;\oom)\leq \Phi(v;\oom),  
\end{align}
for every compact set $\oom\subset X$ (cf.\ [\citealp{Ash}, Theorem 4.5.1]). 

By the Lebesgue decomposition theorem, the measure $\Phi(v; \cdot)$ can be decomposed into measures absolutely continuous and singular with respect to $\lambda$ on $X$, say $\Phi(v; \cdot) = \Phi^a(v; \cdot)+\Phi^s(v; \cdot)$. By the Radon--Nikodým theorem, we have for the absolutely continuous part
\begin{align*}
\Phi^a(v; A)= \int_A \phi(v; x) \d\lambda(x), 
\end{align*}
where $A\subseteq X$ is a $\lambda$-measurable set and $\phi(v;\cdot)$ is $\lambda$-measurable, while  for the singular part, there is a set $\oom_0\subset X$ such that $\lambda(\oom_0)=0$ and 
$\Phi^s(v; A\setminus \oom_0)= 0$,
for every $\lambda$-measurable set $A\subseteq X$.

\subsection{Convex Functions}
Let $\Conv\finite$ denote the space of finite-valued, convex functions on $\mathbb{R}^n$, and we also consider the space of Lipschitz functions
\begin{align*}
    \lip\finite = \{ v \in \Conv\finite : v \ \text{is Lipschitz}\}
\end{align*}
and the space
$$\CD(\mathbb{R}^n)=\{u\in \Conv(\mathbb{R}^n): \dom (u)  \text{ is bounded}\}.$$ 
We say that a sequence $u_k\in \Conv(\mathbb{R}^n)$ or any of its subspaces {epi-converges} to $u\in\Conv(\mathbb{R}^n)$ if 
for every sequence $x_k$ that converges to $x$, we have 
$$u(x)\leq \liminf_{k\rightarrow \infty} u_k(x_k),$$ and there exists a sequence $x_k$ that converges to $x$ such that 
$$u(x)= \lim_{k\rightarrow \infty}u_k(x_k).$$
Epi-convergence is directly related to uniform convergence since $u_k$ epi-converges to $u$ if and only if $u_k$  converges uniformly to $u$  on every compact set that does not
contain a boundary point of $\dom (u)$ (see [\citealp{RockafellarWets}, Theorem 7.17]). In this paper, we also use the following  notion of convergence: For $u,u_k\in\fconvdl$ with $k\in\mathbb{N}$, 
we say that  $u_k\seq u$ if
\begin{enumerate}
\item[(i)] $u_k$ epi-converges to $u$;
\item[(ii)] the Lipschitz constants of $u_k$ are uniformly bounded on $\interior \dom (u)$
by some constant that does not depend on $k$.
\end{enumerate}
Condition (ii)  for the space of real-valued Lipschitz continuous maps defined on the unit sphere $S^{n-1}$
has been considered  in  \cite{Colesanti-Pagnini-Tradacete-Villanueva-2020, Colesanti-Pagnini-Tradacete-Villanueva-2021}  with the additional requirement of uniform convergence on the unit sphere. 
\goodbreak

\begin{example}\label{example}
The assumption of uniformly bounded Lipschitz constants is necessary for Theorem~\ref{teo1p} to hold. For $x\in\R^n$, consider
$$u_k(x)=k\sum_{i=1}^n\langle x, x\rangle +\I_{B^n}(x),$$ 
where $B^n$ is the $n$-dimensional  Euclidean unit ball,  $\langle\cdot,\cdot\rangle$ is the inner product on $\R^n$, and $\I_{B^n}(x)=0$ if $x\in B^n$ and $\infty$ otherwise.  Then $u_k$ epi-converges to the  function  $\I_{\{0\}}$, which is $\infty$ for every  nonzero $x$. This shows that Theorem \ref{teo1p} fails if we replace $\tau$-convergence by epi-convergence since $\oZ$ is, in general, not upper semicontinuous with respect to epi-convergence. 
\end{example}

For $u\in \fconvx$,
let  $\partial u(x)$ be the {subdifferential} of $u$ at $x\in \dom (u)$, 
$$\partial u(x)=\{y\in\R^n: u(z)\geq u(x)+\langle y, z-x\rangle  \text{ for all } z\in \R^n\},$$
and set $\partial u(x)= \emptyset$ for $x\not\in \dom (u)$.
Given a subset $A\subset \R^n$,  the image of $A$ through the subdifferential of $u$ is defined  as
$$\partial u(A)= \bigcup\nolimits_{x\in A}\partial u(x).$$
For $v\in\fconvf$, the Monge--Ampère measure is given by
\begin{align}\label{MAdef}
 \MA(v; B)= V_n(\partial v(B))  
\end{align}
for every Borel set $B\subseteq \R^n$, where $V_n$ is the $n$-dimensional volume. It is a Radon measure on $\R^n$. Let 
\begin{equation}
\Conv_{\MA}(\R^n;\R)=\{ v\in \Conv\finite: \supp (\MA(v;\cdot)) \text{ is compact}\},
\end{equation}
where $\supp$ stands for support.

Let $K\subset\R^{n+1}$ be a non-empty, closed, convex set with support function $h_K: \R^{n+1}\to (-\infty, \infty]$, that is,
$$h_{K}(x, x_{n+1})=\sup\big\{\langle (x, x_{n+1}), (y,y_{n+1})\rangle\colon  (y,y_{n+1})\in K\big\} $$
for $x\in \R^{n}$ and $x_{n+1}\in\R$, where the inner product $\langle \cdot,\cdot\rangle$ is taken in $\R^{n+1}$. 
Note that if $K\in \cK^{n+1}$, we have $h_{K}(\cdot, -1)\in\Conv\finite$.
Let
\begin{equation}  
    \Conv_{\MAn}\finite=
    \{h_{K}(\cdot, -1)\in\Conv_{\MA}\finite: K\in\cK^{n+1}\}.
\end{equation}
Note that the inclusion $\Conv_{\MAn}(\mathbb{R}^n;\mathbb{R})\subset\Conv_{\MA}(\mathbb{R}^n;\mathbb{R})$  is strict.   Functions $ v\in\Conv_{\MAn}(\mathbb{R}^n;\mathbb{R})$ may have $\dom v^*$ not full-dimensional but always bounded,  whereas in $\Conv_{\MA}(\mathbb{R}^n;\mathbb{R})$ the domain of $v^*$ can be unbounded,  which only occurs when it is not full-dimensional.

We equip  $\Conv_{\MA}\finite$ with the following notion of convergence: we say that a sequence $v_k$ in $\Conv_{\MA}\finite$ is $\tau^*$-convergent to  $v\in \Conv_{\MA}\finite$ if
\begin{enumerate}
\item[(i)] $v_k$ epi-converges to $v$;
\item[(ii)] there exists a compact set in $\mathbb{R}^n$ containing the supports of $\MA(u; \cdot)$ and $\MA(u_k; \cdot)$ for all $k\in\mathbb{N}$.
\end{enumerate}
In this case we write $v_k \dseq v$. The notation will be justified in Lemma \ref{duality}.
\goodbreak
\section{The Dual Space}

For  $u\in \Conv(\mathbb{R}^n)$, the {Legendre transform} or convex conjugate of $u$ is the function $u^*: \R^n\to (-\infty, \infty]$ given as
\begin{align}
    u^*(y)=\sup \{\langle x,y \rangle - u(x)\colon  x\in\mathbb{R}^n\}
    \end{align}
for $y \in\mathbb{R}^n$.
If $u\in \CD(\mathbb{R}^n)$, then $u^*\in \Conv\finite$ (see [\citealp{RockafellarWets}, Theorem 11.8],  and $u^{**}=u$ (see [\citealp{Schneider:CB2}, Theorem 1.6.13]). Note that convex conjugation is continuous w.r.t.\ epi-convergence,
\begin{equation}\label{Wijsman}
   u_k \text{ is epi-convergent to } u\,\,\Leftrightarrow\,\, u^*_k \text{ is epi-convergent to } u^*
\end{equation}
for $u_k, u\in \fconvx$ (see \cite[Theorem 11.43]{RockafellarWets}).

The epigraph of $u\in\fconvx$ is the non-empty, closed, convex set
\begin{equation}
    \epi(u)=\{(x, x_{n+1})\in\R^{n+1}: x_{n+1} \ge u(x)\}.
\end{equation} 
Hence,
\begin{equation}\label{minus}
    u^*(y)=\sup\{\langle (y, -1),(x,u(x))\rangle\colon  x\in\mathbb{R}^n\} = h_{\epi(u)}(y,-1).
\end{equation}
If $\dom u$ is compact, then there is $K\in \cK^{n+1}$ such that $h_K(\cdot,-1)=h_{\epi(u)}(\cdot,-1)$,
which can be chosen as the intersection of $\epi(u)$ and $\{x_{n+1}\le \max_{x\in \dom (u)} u(x)\}$. Hence, the Legendre transform of a function in $\fconvx$ with compact domain is of the form
\begin{equation}\label{cd}
h_K(\cdot,-1)\, \text{ with } \,K\in\cK^{n+1}.
\end{equation}
For the space of Legendre transforms of functions from $\fconvdl$, we write
\begin{equation}\label{conj}
    \fconvdld=\{u^*:     u\in\fconvdl\}
\end{equation}
and use corresponding notation for further spaces. By [\citealp{Rockafellar-1997}, Corollary 13.3.3],
\begin{align}\label{LipBd}
 \lipd\finite= \CD\RR.   
\end{align}
Using this in its dual form  and \eqref{cd}, we obtain 
\begin{equation}\label{MALip}
\Conv_{\MAn}\finite\subset\lip\finite.
\end{equation}

By [\citealp{Rockafellar-1997}, Corollary 23.5.1], we have the following relationship
\begin{align}\label{sub}
    p\in \partial u(x) \quad  \Leftrightarrow \quad x \in \partial u(p)
\end{align}
for $u\in \Conv(\mathbb{R}^n)$. If  both $u$ and $u^*$ are differentiable, then we can rewrite  \eqref{sub} as
$\nabla u^*(\nabla u(p))=p$.
For $u$ and  $u^*$ twice differentiable and $\det ( \Hess u(p))>0$, differentiating  gives 
\begin{align*}
  \det( \Hess u^*(\nabla u(p)) )=\dfrac1 {\det ( \Hess u(p))}.
\end{align*}
Thus, if $u\in \fconvdl$ is such that $\det( \Hess u(x))>0$ for all $x\in \dom (u)$  and $\zeta\in\conc$, we have
\begin{align}
    \nonumber \int_{\dom (u)} \zeta (\det (\Hess u(x)))\d x &= \int_{\dom (u)}\zeta\left(\dfrac{1}{\det (\Hess u^*(\nabla u(x)))}\right)\d x\\
    \label{gamma} & = \int_{\mathbb{R}^n} \zeta\left(\dfrac{1}{\det (\Hess u^*(y))}\right) \det (\Hess u^*(y))\d y
    \\ &=  \int_{\mathbb{R}^n} \tilde{\zeta} (\det (\Hess u^*(x)))\d x,
\end{align}
where $\tilde{\zeta}(t)= \zeta(1/t)t$ for $t\geq 0$. Note that we also have $\tilde{\zeta}\in\conc$. Using \eqref{sub}, we observe that the last equality holds for every $u\in \fconvdl$. Indeed,  let $u\in \fconvdl$ and let $U$ be the  set of points $x\in\dom (u)$ such that $\det (\Hess u(x))=0$. By assumption, $U$ also contains all points where $u$ is not twice differentiable, 
since we set $\det (\Hess u(x)) = 0$ at such points. If
$U_1\subset U$ is the set of points where $u$ is not twice differentiable, then by Aleksandrov's theorem, $V_n(U_1)=0$. If $D= \partial u(U_1)$, we get by \eqref{MAdef} that $\MA(u^*; D)= V_n(U_1)=0$, which implies  $\MA^a(u^*; D)=0$. Now, if there exists a convex set $U_2\subset U$ such that $\det (\Hess u(x))=0$ for all $x\in U_2$ and $V_n(U_2)>0$, then there exists a vector $p\in\mathbb{R}^n$ such that $p\in\partial u(U_2)$ and by \eqref{sub} we have $U_2\subset \partial u^*(p)$, that is, $u^*$ is not twice differentiable at $p$. Using \eqref{gamma}, we conclude that 
\begin{align}
 \int\limits_{\dom (u)} \zeta (\det (\Hess u(x)))\d x 
 &=  \int\limits_{U} \zeta (\det (\Hess u(x)))\d x
 + \int_{\dom (u)\setminus U}\zeta (\det (\Hess u(x)))\d x\nonumber\\
 & =   \int_{\mathbb{R}^n}\tilde{\zeta}(\det (\Hess u^*(x)))\d x.\label{conjugate}
\end{align}

Let $u_k,u\in \fconvdl$. 
We aim to show that if the functions $u_k,u\in\fconvdl$ and $u_k \seq u$, then the functions $u_k^*, u^*\in \Conv_{\MAn}\finite$ and  $u_k^* \dseq u^*$.  Additionally, we will prove the reverse implication. To do this, we will use the following result (see, for example, [\citealp{beck}, Theorem 3.61]).

\begin{lemma}\label{est}
A function $u\in \Conv\RR$ has a  Lipschitz constant  $L$    in the interior of its domain  if and only if $\Vert p\Vert\leq L$ for every $p\in\partial u(x)$, where $x\in \interior \dom (u)$.    
\end{lemma}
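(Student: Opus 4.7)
\medskip
\noindent
\textbf{Proof plan for Lemma \ref{est}.} The statement is a standard characterization of the local Lipschitz constant of a convex function in terms of the Euclidean norm of its subgradients. The plan is to prove the two implications separately, in both cases relying solely on the defining subgradient inequality $u(z) \geq u(x) + \langle p, z-x\rangle$ for $p \in \partial u(x)$, together with the Cauchy--Schwarz inequality. No further structure of $u$ or of its domain is used beyond the fact that $\interior \dom(u)$ is open and convex.

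For the implication $(\Leftarrow)$, I assume $\|p\| \leq L$ for every $p \in \partial u(x)$ with $x \in \interior \dom(u)$ and want to establish the Lipschitz estimate. I pick two points $x,y \in \interior \dom(u)$. Since $u \in \Conv\RR$ is finite and hence continuous on the open convex set $\interior \dom(u)$, the subdifferential $\partial u(x)$ is non-empty at every such point (by \cite[Theorem 23.4]{Rockafellar-1997}). Choosing $p \in \partial u(x)$ and $q \in \partial u(y)$, the two subgradient inequalities
\[
u(y) \geq u(x) + \langle p, y-x\rangle, \qquad u(x) \geq u(y) + \langle q, x-y\rangle
\]
combine to give $\langle q, y-x\rangle \leq u(y) - u(x) \leq \langle p, y-x\rangle$. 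Cauchy--Schwarz together with the assumed bound $\|p\|,\|q\| \leq L$ then yields $|u(y)-u(x)| \leq L\|y-x\|$, which is exactly the Lipschitz property with constant $L$.

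For the implication $(\Rightarrow)$, I assume $u$ is Lipschitz with constant $L$ on $\interior \dom(u)$ and fix $x \in \interior \dom(u)$ together with $p \in \partial u(x)$. The case $p = 0$ is trivial, so I may assume $p \neq 0$. Since $\interior \dom(u)$ is open, for all sufficiently small $t > 0$ the point $x + t\,p/\|p\|$ still lies in $\interior \dom(u)$. Testing the subgradient inequality at this point gives
\[
u\bigl(x + t\,p/\|p\|\bigr) - u(x) \geq \bigl\langle p, t\,p/\|p\|\bigr\rangle = t\,\|p\|,
\]
while the Lipschitz hypothesis provides the upper bound $u(x+t\,p/\|p\|) - u(x) \leq L\,t$. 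Dividing by $t$ yields $\|p\| \leq L$, as required.

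I do not anticipate any real obstacle: both directions are short arguments from definitions. The only subtlety worth flagging is in $(\Leftarrow)$, where one must ensure the subdifferentials used are non-empty, which is guaranteed by the finiteness and convexity of $u$ on the open set $\interior \dom(u)$. In $(\Rightarrow)$, one must take $t$ small enough that the perturbed point stays inside $\interior \dom(u)$, which is possible because the domain's interior is open.
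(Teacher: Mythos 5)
The paper does not prove Lemma~\ref{est}; it cites it as a known fact from Beck's textbook (Theorem~3.61), so there is no in-paper argument to compare against. Your self-contained proof is correct and is the standard one: the $(\Leftarrow)$ direction by pairing subgradient inequalities at both endpoints and applying Cauchy--Schwarz, the $(\Rightarrow)$ direction by testing the subgradient inequality along the ray in direction $p/\|p\|$ and using openness of $\interior\dom(u)$ to stay inside the domain. Both directions use only the definition of the subdifferential and the non-emptiness of $\partial u(x)$ at interior points, which you correctly flag.

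One small labeling slip in $(\Leftarrow)$: from $p\in\partial u(x)$ the inequality $u(y)\geq u(x)+\langle p,y-x\rangle$ makes $\langle p,y-x\rangle$ a \emph{lower} bound for $u(y)-u(x)$, while from $q\in\partial u(y)$ one gets the \emph{upper} bound $u(y)-u(x)\leq\langle q,y-x\rangle$; the sandwich should therefore read $\langle p,y-x\rangle\leq u(y)-u(x)\leq\langle q,y-x\rangle$ rather than the reverse. This is immaterial to the conclusion, since Cauchy--Schwarz bounds both sides in absolute value by $L\|y-x\|$, giving $|u(y)-u(x)|\leq L\|y-x\|$ either way.
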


Here, $\|\cdot\|$ denotes the Euclidean norm on $\R^n$. We now prove our main lemma.

\begin{lemma}\label{duality}
A sequence $u_k$ in $\fconvdl$ is $\tau$-convergent to $u \in \fconvdl$ if and only if $u_k^*$ and $u^*$ belong to 
$\Conv_{\MAn}\finite$ and $u_k^*$ is $\tau^*$-convergent to  $u^*$.
\end{lemma}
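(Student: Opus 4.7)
The plan is to reduce the lemma to a pointwise correspondence under the Legendre transform: for $u\in\fconvx$, one has $u\in\fconvdl$ with Lipschitz constant at most $L$ on $\interior \dom(u)$ if and only if $u^*\in\Conv_{\MA}\finite$ with $\supp(\MA(u^*;\cdot))\subseteq L\,B^n$. Combined with \eqref{Wijsman}, this correspondence yields the lemma in both directions: the epi-convergence clauses (i) in $\tau$ and $\tau^*$ are equivalent, while a uniform Lipschitz bound on $(u_k)$ in (ii) of $\tau$ translates into a common closed ball containing all $\supp(\MA(u_k^*;\cdot))$ required in (ii) of $\tau^*$, and conversely.

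For the forward direction, assume $u\in\fconvdl$ has Lipschitz constant $L$ on $\interior \dom(u)$. Since $\dom(u)$ is compact, \eqref{cd} gives that $u^*$ has the required form $h_K(\cdot,-1)$. Using \eqref{MAdef} together with \eqref{sub},
\[
\MA(u^*;B)=V_n\bigl(\{x\in\dom(u):\partial u(x)\cap B\neq\emptyset\}\bigr)
\]
for every Borel set $B\subseteq\R^n$. By Lemma \ref{est}, $\partial u(x)\subseteq L\,B^n$ for every $x\in\interior \dom(u)$, so whenever $B\cap L\,B^n=\emptyset$ the set on the right lies in $\dom(u)\setminus\interior \dom(u)\subseteq\bd\dom(u)$, which has $n$-dimensional Lebesgue measure zero. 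Hence $\supp(\MA(u^*;\cdot))\subseteq L\,B^n$, and $u^*\in\Conv_{\MA}\finite$.

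For the reverse direction, assume $v=u^*\in\Conv_{\MA}\finite$ with $\supp(\MA(v;\cdot))\subseteq L\,B^n$. From \eqref{MALip} we know $v\in\lip\finite$, and \eqref{LipBd} then gives that $u=v^*$ has compact domain. By Lemma \ref{est}, it remains to establish $\partial u(x)\subseteq L\,B^n$ for every $x\in\interior \dom(u)$. I would prove the stronger inclusion $\partial u(x)\subseteq \conv(\supp(\MA(v;\cdot)))$, which yields the desired bound since the convex hull of a set lying in $L\,B^n$ again lies in $L\,B^n$. Writing $\partial u(x)$ as the closed convex hull of limits of $\nabla u(x_k)$ at points of differentiability $x_k\to x$ reduces the problem further to verifying $\nabla u(x')\in\conv(\supp(\MA(v;\cdot)))$ at each differentiability point $x'\in\interior \dom(u)$.

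This is the main obstacle, and I would handle it by contradiction combined with Hahn--Banach. Suppose $p':=\nabla u(x')$ does not lie in the closed convex set $\conv(\supp(\MA(v;\cdot)))$; choose an open half-space $H$ with $p'\in H$ and $H\cap\supp(\MA(v;\cdot))=\emptyset$, so that $\MA(v;H)=0$ and hence $V_n(\partial v(H))=0$. On the other hand, upper semi-continuity of the subdifferential at the differentiability point $x'$ supplies a neighborhood $U'\subseteq\interior \dom(u)$ of $x'$ with $\partial u(x)\subseteq H$ for all $x\in U'$; picking $p_x\in\partial u(x)\subseteq H$ and using \eqref{sub} gives $x\in\partial v(p_x)\subseteq\partial v(H)$. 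Thus $U'\subseteq\partial v(H)$, forcing $V_n(\partial v(H))\ge V_n(U')>0$, the desired contradiction. The pointwise correspondence established in this way transports uniform Lipschitz bounds on $(u_k)$ to a common compact support for the measures $\MA(u_k^*;\cdot)$ and back, which together with \eqref{Wijsman} completes both implications of the lemma.
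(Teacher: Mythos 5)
Your proof follows the same overall architecture as the paper's: reduce the lemma to the equivalence that $u\in\fconvdl$ has Lipschitz constant at most $L$ on $\interior\dom(u)$ if and only if $u^*\in\Conv_{\MA}\finite$ has $\supp(\MA(u^*;\cdot))\subseteq L\B$, and then invoke \eqref{Wijsman} to transfer epi-convergence. Your forward direction is essentially the paper's argument, stated a bit more directly: you compute $\partial u^*(B)=\{x\in\dom(u):\partial u(x)\cap B\neq\emptyset\}$ and observe that for $B$ disjoint from $L\B$ this set is contained in $\bd\dom(u)$, hence Lebesgue-null, whereas the paper first invokes $\MA(u^*;\R^n)=V_n(\dom(u))$ and then locates the mass on $\partial u(\interior\dom(u))$. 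Your reverse direction, however, takes a genuinely different route: the paper argues by contradiction that if $u$ fails to be Lipschitz, then the support of $\MA(u^*;\cdot)$ contains points of arbitrarily large norm, whereas you prove the sharper inclusion $\partial u(x)\subseteq\conv(\supp(\MA(u^*;\cdot)))$ at each $x\in\interior\dom(u)$ by first reducing to differentiability points and then separating by an open half-space $H$ disjoint from the support. Your separation argument (if $\nabla u(x')\in H$ then a whole neighborhood $U'$ of $x'$ has $\partial u(U')\subseteq H$, hence $U'\subseteq\partial u^*(H)$, forcing $\MA(u^*;H)>0$) is clean and correct, relying on the standard upper semicontinuity of the subdifferential map at points of differentiability together with \eqref{sub}; it buys a quantitative inclusion rather than a mere bound.

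There is one small gap you should close. In the reverse direction you write that \eqref{LipBd} gives that $u=v^*$ has compact domain; but $\CD\RR$ is defined with \emph{bounded} domain, and a lower semicontinuous convex function with bounded domain need not have closed domain. The paper instead obtains compactness of $\dom(u)$ from the structural fact that $v\in\Conv_{\MA}\finite$ has the form $h_K(\cdot,-1)$ with $K\in\cK^{n+1}$, so that $\dom(v^*)$ is the (compact) projection of $K$ onto the first $n$ coordinates. You should add this observation; with it, the rest of your argument goes through.
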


\begin{proof}
First, we show that for $\Conv^*_{\MAn}\finite$, defined similar to \eqref{conj}, 
\begin{equation}\label{MaLip}
    \Conv^*_{\MAn}\finite\subset \fconvdl.
\end{equation}
Let $u\in \Conv^*_{\MAn}\finite$. By \eqref{MALip} and \eqref{LipBd}, we see that $u\in \CD\RR$. Since $u$ is the Legendre transform of $h_K(\cdot,-1)$ for some $K\in \cK^{n+1}$, the domain of $u$ is compact. If 
$\interior\dom (u)$ is empty, then we do not need the Lipschitz condition on $u$ and hence  $u \in \fconvdl$.  
For $\interior\dom(u)$ non-empty, assume, for the sake of contradiction, that
$u$ is not Lipschitz on $\interior\dom(u)$. By Lemma ~\ref{est},   for every constant $M>0$, there exists $z\in\partial u(\interior \dom (u) )$   such that $\|z\|> M$. Using the continuity and  convexity of $u$, we deduce  that for every sufficiently small neighborhood  $D\subset\mathbb{R}^n$  of $z$, there exists a set $U\subset\interior \dom (u)$ with positive Lebesgue measure such that
$D\subset \partial u(U)$.  By \eqref{sub},
$$D\subset \partial u(U) \quad \Leftrightarrow \quad    U\subset \partial u^*(D),$$
which implies that $\MA(u^*;D)>0$. Since this holds for every sufficiently small neighborhood of $z$, we obtain that $z\in\supp (\MA(u^*; \cdot))$. Therefore, we have $\supp (\MA(u^*; \cdot))\not\subset M\B$. Since $M>0$
was arbitrary, this leads to a contradiction to 
$u^* \in \Conv_{\MAn}\finite$.
Hence, we conclude that $u$ is Lipschitz on $\interior\dom (u)$ and, thus, $u\in \fconvdl$. 

Now, assume that $u_k^*, u^*\in \Conv_{\MAn}\finite$ for every $k\in\mathbb{N}$ and that $u_k^*\dseq u^*$ as $k\to \infty$. From \eqref{MaLip}, it follows that $u_k,u\in  \fconvdl$ for every $k\in\mathbb{N}$. Let $MB^n$ be the ball containing the support  of the Monge--Ampère measures of $u_k^*$ and $u^*$ for every  $k\in\mathbb{N}$. We have seen that
\begin{align*}
    \partial u_k(\interior \dom (u_k) )\not\subset M\B \quad \Rightarrow \quad \supp (\MA(u_k^*; \cdot))\not\subset M\B.
\end{align*}
Consequently, we have $ \partial u_k(\interior \dom (u_k)),  \partial u(\interior \dom (u) )\subseteq M\B$.  By Lemma~\ref{est}, this is equivalent to saying that the Lipschitz constants of $u_k$ and $u$ are bounded by $M$. Using \eqref{Wijsman}, we conclude that $u_k\seq u$.

For the converse, assume that $u\in \fconvdl$ with Lipschitz constant $L_u$.
By \cite[p.\ 227]{Rockafellar-1997}, we have
\begin{equation}\label{monge}
    \MA(u^*;\R^n) = V_n(\partial u^*(\R^n))= V_n(\dom (u)).
\end{equation}
If $V_n(\dom (u))=0$,  this implies that $\supp (\MA(u^*;\cdot))$ is compact.
For the case $V_n(\dom (u))>0$,
set
    $D= \partial u(\interior \dom (u))$,
and note that, by \cite[p.\ 227]{Rockafellar-1997}, we have
  $\MA(u^*; D)= V_n(\dom (u)).$
Hence, it follows from \eqref{monge} that $\MA(u^*;\cdot)$ is supported on $D$. By Lemma \ref{est}, we have $D \subseteq L_u \B$, which implies that
\begin{equation}\label{1000}
\supp (\MA(u^*;\cdot))\subseteq L_u\B.
\end{equation}
Combined with the fact that $u$ has compact domain and \eqref{cd}, this implies that $u^*\in \Conv_{\MAn}\finite$. Hence, 
$\fconvdld=\Conv_{\MAn}\finite$.

To conclude the proof, it  remains to show that if $u_k,u\in \fconvdl$ for every $k\in\mathbb{N}$ and $u_k\seq u$ as $k\to \infty$, then $u_k^*\dseq u^*$. Let $M>0$ be  such that the Lipschitz constants of $u_k$ and $u$ are bounded by $M$. By \eqref{1000}, we know that
$$\supp (\MA(u^*;\cdot))\subset L_{u}\B \qquad \mbox{and} \qquad  \supp (\MA(u_k^*;\cdot))\subset L_{u_k}\B,$$
where $L_u$ and $L_{u_k}$ are the Lipschitz constants of $u$ and $u_k$, respectively. Since 
$L_{u_k}, L_u\leq M$
for every $k\in\mathbb{N}$, we conclude that 
$$\supp (\MA(u^*;\cdot)), \, \supp (\MA(u_k^*;\cdot))\subset M\B,$$
and therefore, by \eqref{Wijsman}, we have $u_k^*\dseq u^*$.
This completes the proof.
\end{proof}

\section{Upper and Lower Semicontinuity}\label{upper}

Let $X$ be a topological space with $\sigma$-finite Radon measure $\lambda$. Let $\cF(X)$ be a family of real-valued functions on $X$ equipped with a notion of convergence, and $\Phi(v; \cdot)$  a family of $\sigma$-finite Radon measures depending weak$^*$-continuously on the functions $v\in \cF(X)$.
Let  $\cF_{\Phi}(X)$ denote the set of functions $v\in \cF(X)$ such that the support of $\Phi(v;\cdot)$ is compact,  and let $\phi(v;\cdot)$ denote    the  density of the absolutely continuous part of $\Phi(v;\cdot)$ for $v\in \cF(X)$.  We equip $X\times \R$ with the product topology. The following result is a functional version of Theorem 2 in \cite{Ludwig:curvature}.
\goodbreak

\begin{teo}\label{main}
For  $\zeta\in \conc$ and non-negative $\omega\in C(X\times\mathbb{R})$,
\begin{align*}
    \oZ(v)=\int_{X} \zeta(\phi(v;x))\,\omega(x,v(x))\d\lambda(x),
\end{align*}\
is finite for every $v\in \F\cap C(X)$, and 
\begin{equation}
   \oZ(v)\ge \limsup_{k\to\infty} \oZ(v_k)
\end{equation}
for every sequence of functions $v_k$ in $\F\cap C(X)$ with uniformly bounded supports $\supp (\Phi(v_k;\cdot))$
that converges to $v\in\F\cap C(X)$.
\end{teo}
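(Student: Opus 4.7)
The plan is to establish finiteness first and then prove upper semicontinuity via a partition argument combining Jensen's inequality, the weak$^*$ upper bound \eqref{weak-star}, and the Lebesgue decomposition of $\Phi(v;\cdot)$. I first observe that a non-negative concave $\zeta$ on $[0,\infty)$ with $\zeta(0)=0$ is automatically non-decreasing, and that $\zeta(t)/t\to 0$ yields, for every $\varepsilon>0$, a constant $C_\varepsilon>0$ with $\zeta(t)\le \varepsilon\,t + C_\varepsilon$ for all $t\ge 0$. For $v\in\F\cap C(X)$, the support $K:=\supp(\Phi(v;\cdot))$ is compact, so $\phi(v;\cdot)=0$ $\lambda$-a.e.\ outside $K$, and $\omega(\cdot,v(\cdot))$ is bounded on $K$ since $v(K)$ is a compact subset of $\R$. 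Using $\Phi^a(v;K)\le \Phi(v;K)<\infty$ and $\lambda(K)<\infty$ (Radon measures are finite on compacta), the sublinear bound immediately gives finiteness of $\oZ(v)$.

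For the upper semicontinuity, fix a compact $\oom\subset X$ containing $\supp(\Phi(v;\cdot))$ and all $\supp(\Phi(v_k;\cdot))$, and fix $\varepsilon>0$. By the Lebesgue decomposition there is a Borel set $\Sigma\subset \oom$ with $\lambda(\Sigma)=0$ that carries $\Phi^s(v;\cdot)$; by outer regularity of $\lambda$, enclose $\Sigma$ in an open $U$ with $\lambda(U)<\varepsilon$, and split $\oom=A_1\cup A_2$ with $A_1:=\oom\cap U$ and $A_2:=\oom\setminus U$ compact, so $\Phi(v;A_2)=\Phi^a(v;A_2)$. Next, pick a finite partition of $A_2$ into Borel pieces $E_i$ of small diameter with compact closures $\bar E_i$ of $\lambda$-null boundary, so that on each $E_i$ the factor $\omega(\cdot,v_k(\cdot))$ is uniformly within some $\delta$ of a constant $\omega_i$ for all large $k$ (this relies on the continuity of $\omega$ together with the locally uniform convergence of $v_k\to v$ on $\oom$ that the notion of convergence in $\cF(X)$ is tacitly assumed to provide, as for epi-convergence in the paper's motivating examples).

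On each $E_i$, Jensen's inequality for the concave non-decreasing $\zeta$, combined with $\Phi^a\le\Phi$, yields
$$
\int_{E_i}\zeta(\phi(v_k;x))\,\omega(x,v_k(x))\d\lambda(x)
\le (\omega_i+\delta)\,\lambda(E_i)\,\zeta\bigl(\Phi(v_k;\bar E_i)/\lambda(E_i)\bigr),
$$
while on $A_1$ the bound $\zeta(t)\le\varepsilon\,t+C_\varepsilon$ controls the contribution by a fixed multiple of $\varepsilon\,\Phi(v_k;\oom)+C_\varepsilon\,\varepsilon$, uniformly in $k$. Taking $\limsup_{k\to\infty}$ and using \eqref{weak-star} on each compact $\bar E_i$ (together with the continuity and monotonicity of $\zeta$) replaces $\Phi(v_k;\bar E_i)$ by $\Phi(v;\bar E_i)$, and \eqref{weak-star} applied to $\oom$ keeps the $A_1$-contribution uniformly bounded. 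Refining the partition of $A_2$ and invoking Lebesgue's differentiation theorem—valid because $\Phi^s(v;A_2)=0$, so $\Phi(v;\bar E_i)/\lambda(E_i)\to\phi(v;x)$ for $\lambda$-a.e.\ $x\in A_2$ as the piece containing $x$ shrinks—the resulting Riemann-type sums converge to $\int_{A_2}\zeta(\phi(v;x))\,\omega(x,v(x))\d\lambda(x)\le\oZ(v)$; letting $\varepsilon\to 0$ finishes the proof.

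The subtlest point, and the one where the structural hypotheses on $\zeta$ do the real work, is the control of the $A_1$-contribution: the averaged density $\Phi(v_k;A_1)/\lambda(A_1)$ may be enormous because $\Phi^s(v_k;\cdot)$ can concentrate mass near $\Sigma$, yet the sublinearity $\zeta(t)/t\to 0$ is exactly what keeps $\int_{A_1}\zeta(\phi(v_k;\cdot))\d\lambda$ small when $\lambda(A_1)<\varepsilon$. The other delicate issue is the simultaneous handling of three limits—partition refinement, $k\to\infty$, and $\varepsilon\to 0$—which forces the $E_i$ to be chosen so that \eqref{weak-star} applies to compact pieces while Lebesgue differentiation operates on a set of full $\lambda$-measure inside $A_2$.
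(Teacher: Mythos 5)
There is a genuine gap in the partition argument on $A_2$, and it is the central one. You invoke the Lebesgue differentiation theorem to conclude that $\Phi(v;\bar E_i)/\lambda(E_i)\to\phi(v;x)$ for $\lambda$-a.e.\ $x$ as the pieces containing $x$ shrink. However, Theorem~\ref{main} is stated for a Hausdorff topological space $X$ (a metric space in Section~\ref{tools}) equipped with an arbitrary $\sigma$-finite Radon measure $\lambda$. In this generality no Lebesgue--Besicovitch differentiation theorem is available: one needs a Vitali or Besicovitch covering property, typically guaranteed by a doubling condition, which the hypotheses do not supply. Even if one restricted to $X=\R^n$, your scheme would still need the $E_i$ to shrink nicely to each point (not merely to have small diameter) and a dominated-convergence step to pass the limit through the Riemann-type sum $\sum_i\lambda(E_i)\,\zeta\bigl(\Phi(v;\bar E_i)/\lambda(E_i)\bigr)$; neither is addressed. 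The paper bypasses differentiation entirely: it applies Lusin's theorem to obtain a compact $\oom_\varepsilon$ with $\lambda(\oom\setminus\oom_\varepsilon)<\varepsilon$ on which $\phi(v;\cdot)$ is continuous, disjoint from the singular carrier $\oom_0$, and then partitions $\oom_\varepsilon$ not spatially but by \emph{level sets} of $\phi(v;\cdot)$, i.e.\ sets $\oom_i=\{x\in\oom_\varepsilon: t_i\le\phi(v;x)\le t_{i+1}\}$ with $t_{i+1}-t_i\le\delta$ chosen so that $\zeta$ oscillates by at most $\eta$ on $[t_i,t_{i+1}]$. Jensen plus \eqref{weak-star} on each compact $\oom_i$ then gives $\limsup_k\le\sum_i\zeta(t_{i+1})\int_{\oom_i}\omega\,\d\lambda$, which is within $\eta\int\omega\,\d\lambda$ of $\int_{\oom_\varepsilon}\zeta(\phi(v;\cdot))\omega\,\d\lambda$, with no differentiation and no refinement limit.

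A smaller, but real, slip occurs in the estimate on $A_1$. You use a single parameter $\varepsilon$ both to bound $\lambda(U)<\varepsilon$ and to write $\zeta(t)\le\varepsilon t+C_\varepsilon$, and then let $\varepsilon\to0$. But $C_\varepsilon$ diverges as $\varepsilon\to0$ (e.g.\ $\zeta(t)=\sqrt t$ gives $C_\varepsilon=1/(4\varepsilon)$), so the product $C_\varepsilon\,\varepsilon$ need not vanish. This is repairable by decoupling the two parameters (choose $t$ and $\varepsilon$ independently, as the paper does, first sending $\varepsilon\to0$ for fixed $t$ and only then $t\to\infty$), but as written the argument does not close. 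The finiteness step and the use of Jensen with the weak$^*$ bound \eqref{weak-star} on compacta are both fine, and the recognition that sublinearity of $\zeta$ is what tames the near-singular mass on the thin set is exactly the right structural observation. To make the proof work in the stated generality, replace the differentiation step by the Lusin/level-set decomposition.
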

\goodbreak

Hence we can say that $\oZ$ is $\tau$-upper semicontinuous on $\F\cap C(X)$.
In the proof of Theorem  \ref{main} and Theorem \ref{lower}, we will use  Jensen's inequality, which guarantees that for a concave function $\zeta\colon [0,\infty)\rightarrow \mathbb{R}$ and an integrable function $v\colon C\rightarrow \mathbb{R}$, where $C$ is a compact subset of $X$ with positive $\lambda$-measure, 
\begin{align}\label{jensen}
   \dfrac{1}{\lambda(C)}\int_C \zeta(v(x)) \d\lambda(x) \leq \zeta\left( \dfrac{1}{\lambda(C)}\int_C v(x)\d\lambda(x)\right).
\end{align}
(cf.\ [\citealp{Rudin}, Theorem 3.3]). 
If $\zeta$ is convex, the inequality is reversed. 

Note that $\zeta\in\conc$ is continuous on $[0,\infty)$ and that $\zeta(0)=0$. Since $\zeta$ is concave and non-negative on $[0,\infty)$, the function $\zeta$ is non-decreasing. Using these properties, we get $\zeta(r t)\geq r \zeta(t)$
for every $t>0$ and $0<r<1$. In particular, if we take $s=r t<t$ we obtain 
$$\frac{\zeta(s)}{s}\geq \frac{\zeta(t)}{t},$$
and this means  that $t\mapsto{\zeta(t)}/{t}$ is non-increasing.

\begin{proof}[Proof of Theorem \ref{main}]
    
Let $\varepsilon>0$ be given, $v\in \F\cap C(X)$, and $\oom\subset X$ compact. Let $\oom_0$ be the set where the singular part of $\Phi(v;\cdot)$ is concentrated. Since $\lambda(\oom_0)=0$ and $\phi(v;\cdot)$  is measurable $\lambda$-a.e.\ on $\oom$, we can choose by Lusin's theorem (see, for example, [\citealp{Ash}, Corollaries 4.3.17]) a compact set $\oom_\varepsilon\subset \oom$ where $\phi(v;\cdot)$ is continuous as a function restricted to $\oom_\varepsilon$ such that
$\oom_\varepsilon \cap \oom_0= \emptyset$
and 
\begin{align}\label{equ5.12}
\lambda(\oom \setminus \oom_\varepsilon)\leq \varepsilon.
\end{align}

Let $v_k$ be a sequence in $\F\cap C(X)$ converging to $v$. First, we show that 
\begin{equation}\label{equ5.13}
    \limsup_{k\rightarrow\infty} \int_{\oom_\varepsilon}\zeta(\phi(v_k;x))\,\omega(x,v_k(x))\d\lambda(x)
    \leq \int_{\oom_\varepsilon} \zeta(\phi(v;x))\,\omega(x,v(x))\d\lambda(x)
\end{equation}
holds. Set
$a=\inf\{\phi(v;x)\colon x\in \oom_\varepsilon\}$ and  $b=\sup\{\phi(v;x)\colon x\in \oom_\varepsilon\}$.
Note that $b<\infty$ since $\phi(v;\cdot)$ is continuous on the compact set $\oom_{\varepsilon}$. Hence, $\zeta$ is uniformly continuous on $[a,b]$.  For $\eta>0$ given,  there exists a number $\delta>0$ such that
\begin{align}\label{equ5.14}
    |\zeta(s)-\zeta(t)|\leq \eta
\end{align}
whenever $s,t\in [a,b]$ are such that $|s-t|\leq \delta$. 

Since $\lambda(\{x\in\oom_\varepsilon: \phi(v;x)=t\})>0$ holds only for countably many $t$, we can choose  a subdivision $a=t_1\leq t_2\leq \cdots \leq t_{m+1}=b$ of $[a,b]$, such that 
\begin{align}\label{equ5.15}
\max_{i=1,\dots, m}(t_{i+1}-t_i)\leq \delta
\end{align}
and such that 
\begin{align}\label{zeroo}
\lambda(\{x\in\oom_{\varepsilon}: \phi(v;x)=t_i\})=0
\end{align}
for $i=2,\dots, m$.  Setting
$\oom_i=\{x\in \oom_\varepsilon: t_i\leq \phi(v;x)\leq t_{i+1}\}$, so that $\oom_{\varepsilon}=\cup_{i=1}^m  \oom_i$, and 
using the linearity of the integral, that $\zeta$ is non-decreasing, and \eqref{zeroo}, we get
\begin{align}\label{equ5.16}
\int_{\oom_\varepsilon} \zeta(\phi(v; x))\,\omega(x,v(x))\d\lambda(x) & \geq \sum_{i=1}^m \zeta(t_i)\int_{\oom_i} \omega(x,v(x))\d\lambda(x).
\end{align}
Consider $J_k\subseteq\{1, \dots, m\}$, depending on $k$ such that $\int_{\oom_i} \omega(x,v_k(x))\d\lambda(x)>0$ precisely if $i\in J_k$. By  Jensen's inequality and the concavity of $\zeta$, 
\begin{multline}
\int_{\oom_i}\zeta(\phi(v_k;x))\,\omega(x,v_k(x))\d\lambda(x)\\
\leq \zeta\Big(\dfrac{\int_{\oom_i}\omega(x,v_k(x))\,\phi(v_k;x)\d\lambda(x)}{\int_{\oom_i}\omega(x,v_k(x))\d\lambda(x)}\Big)\int_{\oom_i}\omega(x,v_k(x))\d\lambda(x)
\end{multline}
holds for each $i\in J_k$. Hence,
\begin{multline}
 \int_{\oom_\varepsilon}\zeta(\phi(v_k; x))\,\omega(x,v_k(x))\d\lambda(x) \\ 
 \leq \sum_{i\in J_k} \zeta\Big(\dfrac{\int_{\oom_i}\omega(x,v_k(x))\,\phi(v_k;x)\d \lambda(x)}{\int_{\oom_i}\omega(x,v_k(x))\d\lambda(x)}\Big)\int_{\oom_i}\omega(x,v_k(x))\d\lambda(x).
\end{multline}
Since $\phi(v; \cdot)$ is continuous on the compact set $\oom_\varepsilon$, the   sets $\oom_i$ are compact for $i=1,\dots, m$. Hence, the functions $\omega(\cdot,v_k(\cdot))$ and $\omega(\cdot,v(\cdot))$ are uniformly continuous on $\oom_i$. Then, for any $\tilde{\varepsilon} > 0$, there exists $k_0$ such that for all $k \geq k_0$, 
\begin{align*}
\int_{\oom_i} \omega(x, v_k(x)) \d\Phi(v_k; x) 
&\leq  \int_{\oom_i} \omega(x, v(x)) \d\Phi(v_k; x) + \tilde{\varepsilon}\,  \Phi(v_k; \oom_i).
\end{align*}
Using that $\Phi(v_k; \cdot)$ weak$^*$-converges to $\Phi(v; \cdot)$, and in particular \eqref{weak-star}, we obtain 
\begin{align*}
\limsup_{k\to\infty} \int_{\oom_i} \omega(x, v_k(x)) \d\Phi(v_k; x) 
&\leq \int_{\oom_i} \omega(x, v(x)) \d\Phi(v; x) + \tilde{\varepsilon} \, \Phi(v; \oom_i).
\end{align*}
Since $\tilde{\varepsilon} > 0$ was arbitrary, it follows that
\[
\limsup_{k\to\infty}\int_{\oom_i}\omega(x,v_k(x))\d\Phi(v_k;x)\leq \int_{\oom_i}\omega(x,v(x))\d\Phi(v;x).
\]
Hence, 
\begin{align}
\limsup_{k\to\infty}& \int_{\oom_\varepsilon} \zeta(\phi(v_k;x))\,\omega(x,v_k(x))\d\lambda(x) \\
&\leq \limsup_{k\to\infty}
\sum_{i\in J_k} \zeta\Big(\dfrac{\int_{\oom_i}\phi(v_k;x)\,\omega(x,v_k(x))\d\lambda(x)}{\int_{\oom_i}\omega(x,v_k(x))\d\lambda(x)}\Big)\int_{\oom_i}\omega(x,v_k(x))\d\lambda(x)\\
\label{eqqu9}
&\leq \sum_{i=1}^m \zeta(t_{i+1})\int_{\oom_i}\omega(x,v(x))\d\lambda(x)\\
&= \sum_{i=1}^m (\zeta(t_i)+(\zeta(t_{i+1})-\zeta(t_i)))\int_{\oom_i}\omega(x,v(x))\d\lambda(x).
\end{align}
Note that we used the definition of $\oom_i$ and the monotonicity of $\zeta$ in the second inequality. Now using  \eqref{eqqu9}, \eqref{equ5.16}, \eqref{equ5.15}, and \eqref{equ5.14}, we conclude that
\begin{multline}
 \limsup_{k\to\infty} \int_{\oom_\varepsilon} \zeta(\phi(v_k; x))\,\omega(x,v_k(x))\d\lambda(x) \\
  \leq \int_{\oom_\varepsilon} \zeta( \phi(v;x))\,\omega(x,v(x))\d\lambda(x) + \eta \int_{\oom_\varepsilon}\omega(x,v(x))\d\lambda(x).
\end{multline}
Since $\eta>0$ is arbitrary, this proves \eqref{equ5.13}.
\goodbreak

The second step is  to show that
\begin{equation}\label{equ5.17}
\limsup_{k\to\infty}\int_{\oom}\zeta(\phi(v_k;x))\,\omega(x,v_k(x))\d\lambda(x) \\
\leq \int_{\oom}\zeta( \phi(v; x))\,\omega(x,v(x))\d\lambda(x). 
\end{equation}
Since $\zeta$ is non-decreasing and $\zeta(t)/t$ is non-increasing, we see that for every $t>0$,
\begin{align*}
\int_{\oom\setminus \oom_\varepsilon}&\zeta(\phi(v_k;x))\,\omega(x,v_k(x))\d\lambda(x)  \\
& =  \int_{\{x\in \oom\setminus(\oom_0\cup \oom_\varepsilon): \ \phi(v_k;x)\leq t\}}\zeta(\phi(v_k;x))\,\omega(x,v_k(x))\d\lambda(x)\\
& \qquad +\int_{\{x\in \oom\setminus (\oom_0\cup\oom_\varepsilon): \ \phi(v_k;x)> t\}}\zeta(\phi(v_k;x))\,\omega(x,v_k(x))\d\lambda(x) \\
&\leq \zeta(t)\int_{\oom\setminus \oom_\varepsilon}\omega(x,v_k(x))\d\lambda(x)+ \dfrac{\zeta(t)}{t}\int_{\oom\setminus \oom_\varepsilon}\omega(x,v_k(x))\, \phi(v,x)\d\lambda(x),
\end{align*}
where we use that $\lambda(\oom_0)=0$.
This implies, combined with  \eqref{equ5.13}  that  
\begin{multline}\limsup_{k\rightarrow \infty}\int_{\oom}\zeta(\phi(v_k;x))\,\omega(x,v_k(x))\d\lambda(x)
\leq \int_{\oom}\zeta(\phi(v;x))\,\omega(x,v(x))\d\lambda(x)\\
\,\,\,\,\,\,+\zeta(t)\!\int_{\oom\setminus \oom_\varepsilon}\!\!\,\omega(x,v(x))\d\lambda(x)
 +\frac{\zeta(t)}{t}\!\int_{\oom}\!\omega(x,v(x))\d\Phi(v, x)
\end{multline}
for every $t>0$. Since $\omega(\cdot,v(\cdot))$ is a non-negative continuous function and $\oom$ is compact, we have  $0\leq\max_{x\in\oom}\omega(x,v(x))<\infty$.
By this and \eqref{equ5.12}, we obtain
\begin{multline}
    \limsup_{k\rightarrow \infty}\int_{\oom}\zeta(\phi(v_k;x))\,\omega(x,v_k(x))\d\lambda(x)
\leq \\
\int_{\oom}\!\zeta(\phi(v;x))\,\omega(x,v(x))\d\lambda(x)
+\big(\varepsilon\,\zeta(t)
+\frac{\zeta(t)}{t}
\Phi(v; \oom)
\big)\max_{x\in\oom}\omega(x,v(x)).
\end{multline}
Since $\varepsilon>0$ is arbitrary and since $t$ does not depend on $\varepsilon$,  for every $t>0$
\begin{equation}
\label{equ5.18}
\begin{split}
\limsup_{k\rightarrow \infty}&\int_{\oom}\zeta(\phi(v_k;x))\,\omega(x,v_k(x))\d\lambda(x) \\
&\hskip -12pt\leq \int_{\oom}\zeta(\phi(v;x))\,\omega(x,v(x))\d\lambda(x)+\frac{\zeta(t)}{t}
\Phi(v; \oom)
\max_{x\in\oom}\omega(x,v(x)).   
\end{split}
\end{equation}
Using that $\Phi(v;\cdot)$  is a Radon measure  
and that $\lim_{t\rightarrow\infty}\zeta(t)/t=0$, we can make $\zeta(t)/t$ arbitrarily small by choosing $t$ suitably large. Therefore \eqref{equ5.18} proves \eqref{equ5.17}.

To finish, if $\phi(v; \cdot)$ is supported on the compact set $\oom_v$ with $\lambda(\oom_v)>0$ and $\int_{\oom_v}\omega(x,v(x))\d\lambda(x)>0$, then using  that $\zeta(0)=0$, the monotonicity of $\zeta$ and that there exists a compact set $\tilde C\subset X$ such that $\oom_{v_k},\oom_v\subseteq \tilde C$ for all $k\in \mathbb{N}$, where $\oom_{v_k}$ is the support of $\phi(v_k;\cdot)$, we get
$$\int_{X}\zeta(\phi(v_k;x))\,\omega(x,v_k(x))\d\lambda(x) = \int_{\tilde C}\zeta(\phi(v_k;x))\,\omega(x,v_k(x))\d\lambda(x)$$
and, using  Jensen's inequality,
\begin{align*}
\int_{X}\!\!\zeta(\phi(v;x))\,\omega(x,v(x))\d\lambda(x)
&=\int_{\tilde C}\!\zeta(\phi(v;x))\,\omega(x,v(x))\d\lambda(x)\\
& \leq  \zeta\Big(\dfrac{\int_{\tilde C}\phi(v;x)\,\omega(x,v(x))\d\lambda(x)}{\int_{\tilde C}\omega(x,v(x))\d\lambda(x)}\Big)\int_{\tilde C}\,\omega(x,v(x))\d\lambda(x)\\
& \leq  \zeta\Big(\dfrac{\max_{x\in \tilde C}\omega(x,v(x))\Phi(v;\tilde C)}{\int_{\tilde C}\omega(x,v(x))\d\lambda(x)}\Big)\int_{\tilde C}\omega(x,v(x))\d\lambda(x).
\end{align*}
Using again 
that  $\Phi(v;\cdot)$ is a Radon measure, that $\tilde C$ is compact, and that $\omega(\cdot,v(\cdot))$ is a non-negative continuous function, we obtain that the integral is finite.
\end{proof}

We used the continuity of $v_k$ only to show that $\omega(\cdot, v_k(\cdot))$ is continuous. Hence, the proof of Theorem \ref{main} gives also the following result.

\begin{teo}\label{mainb}
For  $\zeta\in \conc$,
\begin{align*}
    \oZ(v)=\int_{X} \zeta(\phi(v;x))\d\lambda(x)
\end{align*}\
is finite for every $v\in \F$, and 
\begin{equation}
   \oZ(v)\ge \limsup_{k\to\infty} \oZ(v_k)
\end{equation}
for every sequence of functions $v_k$ in $\F$ with uniformly bounded supports $\supp (\Phi(v_k;\cdot))$
that converges to $v\in\F$.
\end{teo}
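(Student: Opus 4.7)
My plan is to run the argument of Theorem \ref{main} verbatim with the weight $\omega\equiv 1$. The key observation, already made in the paragraph preceding the statement, is that continuity of $v_k$ and $v$ was only invoked to ensure that $\omega(\cdot,v_k(\cdot))$ and $\omega(\cdot,v(\cdot))$ are continuous on the compact pieces $\oom_i$ used in the slicing step; with $\omega\equiv 1$ this is automatic, so the hypothesis $v_k,v\in C(X)$ can simply be dropped.

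Fix a compact set $\oom\subset X$ and $\varepsilon>0$. I would first apply Lusin's theorem to produce a compact set $\oom_\varepsilon\subset\oom$ disjoint from the $\lambda$-null set $\oom_0$ on which the singular part of $\Phi(v;\cdot)$ concentrates, such that $\phi(v;\cdot)$ is continuous on $\oom_\varepsilon$ and $\lambda(\oom\setminus\oom_\varepsilon)\le\varepsilon$. I would then subdivide the interval $[a,b]$, where $a=\inf_{\oom_\varepsilon}\phi(v;\cdot)$ and $b=\sup_{\oom_\varepsilon}\phi(v;\cdot)$, by points $t_1<\cdots<t_{m+1}$ with mesh smaller than the modulus of uniform continuity of $\zeta$ on $[a,b]$ corresponding to a tolerance $\eta>0$, arranging that $\lambda(\{x\in\oom_\varepsilon:\phi(v;x)=t_i\})=0$ for $2\le i\le m$; this last property is available because only countably many level sets can carry positive $\lambda$-measure.

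The core step is to apply Jensen's inequality \eqref{jensen} to the concave $\zeta$ on each set $\oom_i=\{x\in\oom_\varepsilon:t_i\le\phi(v;x)\le t_{i+1}\}$ of positive $\lambda$-measure, yielding
\begin{align*}
\int_{\oom_i}\zeta(\phi(v_k;x))\d\lambda(x)\le \zeta\!\Big(\tfrac{\Phi(v_k;\oom_i)}{\lambda(\oom_i)}\Big)\lambda(\oom_i),
\end{align*}
and then to pass to $\limsup_{k\to\infty}$ using the weak$^*$-convergence of $\Phi(v_k;\cdot)$ in the form \eqref{weak-star}, applied to each compact $\oom_i$. Combining the resulting bound with the lower Riemann-type estimate $\sum\zeta(t_i)\lambda(\oom_i)\le\int_{\oom_\varepsilon}\zeta(\phi(v;x))\d\lambda(x)$ and the uniform-continuity control $\zeta(t_{i+1})-\zeta(t_i)\le\eta$ gives the desired semicontinuity inequality on $\oom_\varepsilon$ up to an error of $\eta\,\lambda(\oom_\varepsilon)$, which vanishes as $\eta\to 0$. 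To pass from $\oom_\varepsilon$ back to $\oom$ I would split the integrand at a threshold $t>0$: the part with $\phi(v_k;x)\le t$ contributes at most $\zeta(t)\lambda(\oom\setminus\oom_\varepsilon)\le\zeta(t)\varepsilon$, while the part with $\phi(v_k;x)>t$ contributes at most $(\zeta(t)/t)\Phi(v_k;\oom)$ thanks to the monotonicity of $s\mapsto\zeta(s)/s$ established after the statement. Since the supports of $\Phi(v_k;\cdot)$ lie in a fixed compact set, $\Phi(v_k;\oom)$ stays uniformly bounded; sending first $\varepsilon\to 0$ and then $t\to\infty$, using $\lim_{t\to\infty}\zeta(t)/t=0$, closes the estimate on $\oom$. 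Choosing $\oom$ to contain all of the uniformly bounded supports then gives the claim on $X$.

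Finiteness of $\oZ(v)$ for $v\in\F$ follows from a single application of Jensen's inequality on the compact support $\tilde C$ of $\Phi(v;\cdot)$: when $\lambda(\tilde C)>0$, $\oZ(v)\le\zeta(\Phi(v;\tilde C)/\lambda(\tilde C))\lambda(\tilde C)<\infty$, and the case $\lambda(\tilde C)=0$ is immediate since then $\phi(v;\cdot)=0$ $\lambda$-a.e. The only delicate point, just as in Theorem \ref{main}, is organising the two-parameter limit so that $t$ can be chosen independently of $\varepsilon$; beyond this there is no genuine obstacle, since the argument is a transcription of the previous proof with the weight removed.
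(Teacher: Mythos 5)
Your proposal is correct and is essentially the paper's own argument: the paper explicitly derives Theorem \ref{mainb} by observing that the continuity of $v_k$ in the proof of Theorem \ref{main} was needed only to make $\omega(\cdot,v_k(\cdot))$ continuous, so the same proof applies verbatim with $\omega\equiv 1$. Your transcription of that proof (Lusin set, level-set subdivision with $\lambda$-null boundaries, Jensen followed by the weak$^*$ bound \eqref{weak-star} on each compact $\oom_i$, the tail splitting at level $t$ via monotonicity of $\zeta(t)/t$, and the Jensen bound on $\tilde C$ for finiteness) is faithful and correct.
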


Let $\cvx$ be the set of decreasing convex functions $\zeta: [0,\infty)\rightarrow(0,\infty]$ such that  $\lim_{t\rightarrow 0}\zeta(t)=\zeta(0)$ and $\lim_{t\rightarrow \infty}\zeta(t)=0$. The following result is a functional version of \cite[Theorem 6]{Ludwig:gasa}.


\goodbreak
\begin{teo}\label{lower}
Let   $\zeta\in \cvx$ and $\omega\in C(X\times\mathbb{R})$ be non-negative while $C\subseteq X$ is compact. Define $\oZ(\cdot, C)\colon\cF(X)\cap C(X)\to [0, \infty]$ by
\begin{align*}
    \oZ(v, \oom)=\int_{\oom} \zeta(\phi(v;x))\,\omega(x,v(x))\d\lambda(x).
\end{align*}
If $v\in\cF(X)\cap C(X)$, then
\begin{equation}\label{lsc}
    \liminf_{k\rightarrow \infty}\oZ(v_k, \oom)\geq \oZ(v,\oom)
\end{equation}
for every sequence $v_k\in\cF(X)\cap C(X)$ converging to $v\in \cF(X)\cap C(X)$.
\end{teo}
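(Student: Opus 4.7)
The plan is to adapt the proof of Theorem~\ref{main} with all inequalities reversed so as to exploit the convexity of $\zeta$. The substitute for the central Jensen step is that, on any measurable $\oom_i \subseteq \oom$ with $\int_{\oom_i}\omega(\cdot,v_k)\d\lambda>0$,
\[
\int_{\oom_i}\zeta(\phi(v_k;x))\,\omega(x,v_k(x))\d\lambda(x) \;\geq\; \zeta\Bigl(\tfrac{\int_{\oom_i}\phi(v_k;x)\,\omega(x,v_k(x))\d\lambda(x)}{\int_{\oom_i}\omega(x,v_k(x))\d\lambda(x)}\Bigr)\int_{\oom_i}\omega(x,v_k(x))\d\lambda(x),
\]
and I want to bound the argument of $\zeta$ on the right from \emph{above} in terms of the values of $\phi(v;\cdot)$ on $\oom_i$, so that the monotonicity and continuity of $\zeta$ yield a lower bound for the $\liminf$ pointwise in $i$.

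For $\varepsilon>0$, Lusin's theorem (as in the proof of Theorem~\ref{main}) produces a compact $\oom_\varepsilon\subseteq\oom$, disjoint from the singular-support set $\oom_0$ of $\Phi(v;\cdot)$ and with $\lambda(\oom\setminus\oom_\varepsilon)\le\varepsilon$, on which $\phi(v;\cdot)$ is continuous. For a threshold $t_0>0$ (to be sent to $0$ at the end) and mesh $\delta>0$, I take a subdivision $t_0=t_1<\cdots<t_{m+1}=\max_{\oom_\varepsilon}\phi(v;\cdot)$ of mesh at most $\delta$ with $\lambda(\{\phi(v;\cdot)=t_i\})=0$ for $i=2,\dots,m$, and set $\oom_i=\{x\in\oom_\varepsilon:t_i\le\phi(v;x)\le t_{i+1}\}$ for $i\ge 2$ together with $\oom_1=\{x\in\oom_\varepsilon:\phi(v;x)\le t_2\}$; each $\oom_i$ is compact and consecutive $\oom_i$ overlap on $\lambda$-null sets. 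For $i\ge 2$, combining $\phi(v_k;x)\d\lambda\le\d\Phi(v_k;x)$ with \eqref{weak-star} on the compact set $\oom_i$, the uniform convergence of $\omega(\cdot,v_k(\cdot))$ used already in the proof of Theorem~\ref{main}, and the vanishing of $\Phi^s(v;\cdot)$ on $\oom_\varepsilon$, I obtain
\[
\limsup_{k\to\infty}\frac{\int_{\oom_i}\phi(v_k;x)\,\omega(x,v_k(x))\d\lambda(x)}{\int_{\oom_i}\omega(x,v_k(x))\d\lambda(x)}\;\le\;\frac{\int_{\oom_i}\phi(v;x)\,\omega(x,v(x))\d\lambda(x)}{\int_{\oom_i}\omega(x,v(x))\d\lambda(x)}\;\le\; t_{i+1}.
\]
Since $\zeta$ is continuous and non-increasing, this yields $\liminf_k\zeta(\cdot)\ge\zeta(t_{i+1})$; together with the Jensen bound and $\int_{\oom_i}\omega(\cdot,v_k)\d\lambda\to\int_{\oom_i}\omega(\cdot,v)\d\lambda$, I get $\liminf_k\int_{\oom_i}\zeta(\phi(v_k))\omega(\cdot,v_k)\d\lambda\ge\zeta(t_{i+1})\int_{\oom_i}\omega(\cdot,v)\d\lambda$. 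Summing for $i\ge 2$ and comparing with $\int_{\cup_{i\ge 2}\oom_i}\zeta(\phi(v))\omega(\cdot,v)\d\lambda\le\sum_{i\ge 2}\zeta(t_i)\int_{\oom_i}\omega(\cdot,v)\d\lambda$, the telescoping discrepancy $\sum(\zeta(t_i)-\zeta(t_{i+1}))\int_{\oom_i}\omega\d\lambda$ is absorbed by uniform continuity of $\zeta$ on $[t_0,\max_{\oom_\varepsilon}\phi(v;\cdot)]$.

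The main obstacle will be the piece $\oom_1$, because of the possible divergence $\zeta(0)=\infty$. When $\oZ(v,\oom)<\infty$, the set $\{\phi(v;\cdot)=0\}\cap\{\omega(\cdot,v(\cdot))>0\}$ must be $\lambda$-null, so $\int_{\oom_1}\zeta(\phi(v))\omega(\cdot,v)\d\lambda\to 0$ as $t_0\to 0$ by dominated convergence, while the $v_k$-integrals on $\oom_1$ are non-negative and harmless for the $\liminf$; letting $\varepsilon,\delta,t_0\to 0$ then gives \eqref{lsc}. When $\oZ(v,\oom)=\infty$, I apply the same Jensen estimate on $\oom_1$: the limsup of the average of $\phi(v_k)$ there is bounded by $t_2\le t_0+\delta$ via the same weak$^*$ argument, so the lower bound for $\liminf_k\int_{\oom_1}\zeta(\phi(v_k))\omega(\cdot,v_k)\d\lambda$ is at least $\zeta(t_0+\delta)\int_{\oom_1}\omega(\cdot,v)\d\lambda$, and because $\zeta(0)=\infty$ this forces $\liminf_k\oZ(v_k,\oom)=\infty$ after sending $t_0,\delta\to 0$, matching the infinite case of \eqref{lsc}.
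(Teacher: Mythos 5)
Your overall strategy — adapting the proof of Theorem~\ref{main} to the convex case by reversing Jensen's inequality, bounding the averaged argument by $t_{i+1}$ via the weak$^*$-estimate, and then passing to the limit — is sound, and your treatment of the pieces $\oom_i$ with $i\ge 2$ is essentially correct. Your construction differs from the paper's in two genuine ways: you use a single Lusin-compact $\oom_\varepsilon$ with $\lambda(\oom\setminus\oom_\varepsilon)\le\varepsilon$ and a \emph{finite} subdivision of $[t_0,\max_{\oom_\varepsilon}\phi(v;\cdot)]$ with a threshold $t_0\to 0$, and then handle the leftover piece $\oom_1=\{\phi(v;\cdot)\le t_2\}$ by a separate case analysis. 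The paper instead decomposes $\oom$ into countably many disjoint Lusin-compacta $\oom_l$ covering $\oom$ up to a $\lambda$-null set, uses a \emph{bi-infinite} subdivision $(t_i)_{i\in\mathbb Z}$ with $t_i\to 0$ as $i\to -\infty$ and $\max_i|\zeta(t_{i+1})-\zeta(t_i)|\le\eta$, and applies Fatou's lemma for the counting measure twice (once over $i\in J_0\subset\mathbb Z$ and once over $l\in\mathbb N$). The bi-infinite subdivision avoids a boundary piece near $\phi=0$ altogether, which is exactly where your version runs into trouble.

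The gap is in the $\oZ(v,\oom)=\infty$ branch. You claim that $\liminf_k\int_{\oom_1}\zeta(\phi(v_k;x))\omega(x,v_k(x))\d\lambda(x)\ge\zeta(t_0+\delta)\int_{\oom_1}\omega(x,v(x))\d\lambda(x)$ ``forces $\liminf_k\oZ(v_k,\oom)=\infty$ after sending $t_0,\delta\to 0$,'' but this is an indeterminate $\infty\cdot 0$ whenever $\lambda\bigl(\{\phi(v;\cdot)=0\}\cap\{\omega(\cdot,v(\cdot))>0\}\bigr)=0$, because then $\int_{\oom_1}\omega(x,v(x))\d\lambda(x)\to 0$ as $t_0+\delta\to 0$. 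And $\oZ(v,\oom)=\infty$ does \emph{not} imply that this zero set has positive measure: take $X=[0,1]$ with Lebesgue measure, $\omega\equiv 1$, $\phi(v;x)=x$, $\zeta(t)=1/t$; then $\oZ(v,\oom)=\int_0^1 x^{-1}\d x=\infty$ but $\phi(v;\cdot)>0$ almost everywhere, $\oom_1\approx[0,t_2]$, and your bound yields $\zeta(t_2)\int_0^{t_2}\d x=1$, which is finite uniformly in $t_0,\delta$. So your two-case split by finiteness of $\oZ(v,\oom)$ does not match the case split your $\oom_1$-argument actually requires. The correct dichotomy is by whether $\lambda\bigl(\{\phi(v;\cdot)=0\}\cap\{\omega(\cdot,v(\cdot))>0\}\cap\oom\bigr)$ is positive or zero; in the latter case you must instead notice that your own $i\ge 2$ estimate already yields $\liminf_k\oZ(v_k,\oom)\geq \int_{\{\phi(v;\cdot)\geq t_2\}\cap\oom_\varepsilon}\zeta(\phi(v;x))\omega(x,v(x))\d\lambda(x)$, which tends to $\oZ(v,\oom)$ (finite or infinite) by monotone convergence as $\varepsilon, t_2\to 0$. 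The statement is therefore provable by your route, but the write-up as given contains a false step that must be reorganized along these lines. You should also note, as the paper does, that one may assume from the outset $\liminf_k\oZ(v_k,\oom)<\infty$, otherwise the claim is trivial, which simplifies the case analysis.
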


We allow that $\zeta(0) = \infty$. In this case, $\oZ(v, C) < \infty$ only if $C$ is contained in the support of $\Phi(v;\cdot)$, otherwise, the integrand attains the value $\infty$

In the proof, we will also use the following version of Fatou's Lemma (see, for example, [\citealp{Rudin}, Lemma 1.28]),
which says that if $\mu_c$ is the counting measure, then
\begin{equation}\label{fatou}
    \sum_{y\in Y}\liminf_{k\rightarrow \infty}f_k(y)=\int_Y \left(\liminf_{k\rightarrow \infty}f_k(y)\right)\d\mu_c(y)\\ \leq 
    \liminf_{k\rightarrow \infty}\sum_{y\in Y} f_k(y)
\end{equation}
where $Y\subseteq \mathbb{Z}$ and $f_k: Y \to [0, \infty]$ for each $k\in \mathbb{Z}$.

\begin{proof}
Similar to the proof of Theorem \ref{main}, for $\oom\subset X$  compact, let $\oom_0\subset X$ be the $\lambda$-null set, where the singular part of $\Phi(v;\cdot)$ restricted to $\oom$ is concentrated. Choose by Lusin's theorem pairwise disjoint compact sets $\oom_l\subseteq \oom$ for $l\in\mathbb{N}$ such that $\phi(v;\cdot)$ is continuous as a function restricted to $\oom_l$ and such that for every $l\in \mathbb{N}$
\begin{align}\label{eq5.1}
    \oom_l\cap \oom_0=\emptyset
\end{align}
and such that 
\begin{align}\label{zero}
   \lambda\Big(\oom\setminus \bigcup\nolimits_{l=1}^{\infty}\oom_l\Big)=0.
\end{align}
We may assume that $\liminf_{k\to\infty} \oZ(v_k, \oom)<\infty$ and that $\oZ(v_k,\oom)<\infty$ for $k$ sufficiently large. Otherwise \eqref{lsc} clearly holds.

Let $v_k$ be a sequence on $\cF(X)\cap C(X)$ converging to $v\in\cF(X)\cap C(X)$. We will start by proving that for each $l\in\mathbb{N}$ the inequality
\begin{equation}\label{eq5.3}
   \liminf_{k\to\infty}\int_{\oom_l} \zeta(\phi(v_k;x))\,\omega(x,v_k(x))\d\lambda(x)\\
\geq \int_{\oom_l}\zeta(\phi(v;x))\,\omega(x,v(x))\d\lambda(x)
\end{equation}
holds. Let $\eta>0$. We choose an increasing monotone sequence $t_i\in (0,\infty)$ for $i\in\mathbb{Z}$ such that $ \lim_{i\rightarrow \infty}t_i=\sup\{ \zeta(t)\colon t\in [0, \infty))\}$ and $\lim_{i\rightarrow-\infty}t_i=0$ while

\begin{align}\label{eq5.4}
    \max\nolimits_{i\in\mathbb{Z}}|\zeta(t_{i+1})-\zeta(t_i)|\leq \eta
\end{align}
and such that for $i\in\mathbb{Z}$ and $k\geq 0$,
\begin{align}\label{eq5.5}
    \lambda(\{x\in \oom: \phi(v_k;x)=t_i\})=0,
\end{align}
where $v_0=v$. Set
\begin{align}\label{eq5.6}
    \oom_{li}=\{x\in\oom_l: t_i\leq \phi(v;x)\leq t_{i+1}\}.
\end{align}
Since $\phi(v;\cdot)$ is continuous on $\oom_l$ and $\oom_l$ is compact, the sets $\oom_{li}$ are compact for $i\in\mathbb{Z}$. Hence, $\omega(\cdot,v_k(\cdot))$ and $\omega(\cdot,v(\cdot))$ are uniformly continuous on $\oom_{li}$, and using \eqref{weak-star}  we get 
\begin{equation}\label{eq5.7}
   \limsup_{k\to\infty} \int_{\oom_{li}}\phi(v_k;x)\,\omega(x,v_k(x))\d\lambda(x)\leq \int_{\oom_{li}}\phi(v;x)\,\omega(x,v(x))\d\lambda(x).
\end{equation}
By \eqref{eq5.1} and the definition of $\oom_{li}$,
\begin{align}\label{eq5.8}
    \int_{\oom_{li}}\phi(v; x)\,\omega(x,v(x))\d\lambda(x)\leq t_{i+1}\int_{\oom_{li}}\omega(x,v(x))\d\lambda(x).
\end{align}

Since $\zeta$ is decreasing, we obtain
\begin{align}\label{eq5.9}
    \int_{\oom_l}\zeta(\phi(v; x))\,\omega(x,v(x))\d\lambda(x)\leq \sum_{i\in\mathbb{Z}} \zeta(t_i)\int_{\oom_{li}}\omega(x,v(x))\d\lambda(x).
\end{align}
Let $J_k\subset\mathbb{Z}$ be such that $\int_{\oom_{li}}\omega(x,v_k(x))\d\lambda(x)>0$ precisely if $i\in J_k$, and set $v_0=v$. If $i\in J_0$, then $i\in J_k$  for $k$ sufficiently large. By \eqref{eq5.5},  Jensen's inequality, and the convexity of $\zeta$, we obtain
\begin{equation}\label{eq5.10}
\begin{split}
    \int_{\oom_{l}}&\zeta(\phi(v_k; x))\,\omega(x,v_k(x))\d\lambda(x)\\
    &\geq \sum_{i\in J_k}\zeta\left(\dfrac{\int_{\oom_{li}}\phi(v_k; x)\,\omega(x,v_k(x))\d\lambda(x)}{\int_{\oom_{li}}\omega(x,v_k(x))\d\lambda(x)}\right)\int_{\oom_{li}}\omega(x,v_k(x))\d\lambda(x).
\end{split}    
\end{equation}
Using \eqref{eq5.10}, \eqref{fatou}, the monotonicity of $\zeta$, and the classical form of Fatou's lemma,   we obtain
\begin{align*}
    \liminf_{k\to\infty} \int_{\oom_l} &  \zeta(\phi(v_k; x))\,\omega(x,v_k(x))\d\lambda(x)\\
    &\hskip -22pt\geq \liminf_{k\to\infty}\sum_{i\in J_k}\zeta\left(\dfrac{\int_{\oom_{li}}\phi(v_k; x)\,\omega(x,v_k(x))\d\lambda(x)}{\int_{\oom_{li}}\omega(x,v_k(x))\d\lambda(x)}\right)\int_{\oom_{li}}\omega(x,v_k(x))\d\lambda(x)\\
    & \hskip -22pt\geq \sum_{i\in J_0}\liminf_{k\to\infty}\left(\zeta\left(\dfrac{\int_{\oom_{li}}\phi(v_k; x)\,\omega(x,v_k(x))\d\lambda(x)}{\int_{\oom_{li}}\omega(x,v_k(x))\d\lambda(x)}\right)\int_{\oom_{li}}\omega(x,v_k(x))\d\lambda(x)\right)\\ 
 &\hskip -22pt \geq \sum_{i\in J_0} \zeta\Big(\dfrac{\limsup_{k\to\infty}\int_{\oom_{li}}\phi(v_k; x)\,\omega(x,v_k(x))\d\lambda(x)}{\int_{\oom_{li}}\omega(x,v(x))\d\lambda(x)}\Big)\int_{\oom_{li}}\omega(x,v(x))\d\lambda(x),
\end{align*}
and by $\eqref{eq5.7}, \eqref{eq5.8}, \eqref{eq5.9}$, and \eqref{eq5.4}, we conclude that
\begin{align*}
    \liminf_{k\to\infty} \int_{\oom_l} &  \zeta(\phi(v_k; x))\, \omega(x,v_k(x))\d\lambda(x)\\
    &\geq \sum_{i\in J_0} \zeta\left(\dfrac{\int_{\oom_{li}}\phi(v; x)\,\omega(x,v(x))\d\lambda(x)}{\int_{\oom_{li}}\omega(x,v(x))\d\lambda(x)}\right)\int_{\oom_{li}}\omega(x,v(x))\d\lambda(x)\\
    &\geq \sum_{i\in \mathbb{Z}}\zeta(t_{i+1})\int_{\oom_{li}}\omega(x,v(x))\d\lambda(x)\\
    &=\sum_{i\in \mathbb{Z}}(\zeta(t_{i})-(\zeta(t_{i})-\zeta(t_{i+1})))\int_{\oom_{li}}\omega(x,v(x))\d\lambda(x)\\
    & \geq \int_{\oom_{l}}\zeta(\phi(v;x))\,\omega(x,v(x))\d\lambda(x)-\eta\int_{\oom_l}\omega(x,v(x))\d\lambda(x).
\end{align*}
Since $\eta>0$ is arbitrarily, this proves \eqref{eq5.3}.
Finally, \eqref{zero}, \eqref{eq5.5}, \eqref{eq5.3}, and Fatou's Lemma imply
\begin{align*}
\!\liminf_{k\to\infty} \int_{\oom}\! \zeta(\phi(v_k;x))\,\omega(x,v_k(x))\d\lambda(x)
& = \liminf_{k\to\infty} \sum_{l=1}^{\infty}\int_{\oom_l} \!\zeta(\phi(v_k;x))\,\omega(x,v_k(x))\d\lambda(x)\\
& \geq \sum_{l=1}^{\infty}\liminf_{k\to\infty}\int_{\oom_l} \zeta(\phi(v_k;x))\,\omega(x,v_k(x))\d\lambda(x)\\
& = \int_{\oom} \zeta(\phi(v;x))\,\omega(x,v(x))\d\lambda(x),
\end{align*}
as we wanted to prove.
\end{proof}

\goodbreak
We used the continuity of $v_k$ only to show that $\omega(\cdot,v_k(\cdot))$ is continuous. Hence,
we also have the following version of Theorem \ref{lower}.

\begin{teo}\label{lower2}
Let   $\zeta\in \cvx$ and     $C\subseteq X$ compact. For $v\in \cF(X)$, define
 $  \oZ(v,C)=\int_{C} \zeta(\phi(v;x))\d\lambda(x)$ (as an element of $[0, \infty]$).
If $v\in \cF(X)$, then
\begin{align*}
    \liminf_{k\rightarrow \infty}\oZ(v_k,C)\geq \oZ(v,C)
\end{align*}
for every sequence $v_k\in \cF(X)$ that converges to $v$. 
\end{teo}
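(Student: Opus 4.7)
The plan is to follow the proof of Theorem \ref{lower} essentially verbatim, specializing to $\omega \equiv 1$ throughout. The remark immediately preceding the statement already observes that continuity of $v_k$ entered that proof only through the need for $\omega(\cdot, v_k(\cdot))$ to be continuous on the sets $C_{li}$; when $\omega \equiv 1$ this dependence disappears, and the argument carries over for arbitrary (measurable) $v, v_k \in \cF(X)$.

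Concretely, I would begin by invoking Lusin's theorem to choose pairwise disjoint compact sets $C_l \subseteq C$, $l \in \mathbb{N}$, avoiding the $\lambda$-null set $C_0$ carrying the singular part of $\Phi(v;\cdot)$, on each of which $\phi(v;\cdot)$ is continuous, with $\lambda(C \setminus \bigcup_l C_l) = 0$. Assuming $\liminf_k \oZ(v_k, C) < \infty$ (otherwise the claim is trivial), the main task is to prove the localized inequality
\begin{equation*}
\liminf_{k\to\infty} \int_{C_l} \zeta(\phi(v_k;x)) \d\lambda(x) \;\geq\; \int_{C_l} \zeta(\phi(v;x)) \d\lambda(x)
\end{equation*}
for each $l$.

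To prove this, given $\eta > 0$ I would fix a monotone sequence $(t_i)_{i \in \mathbb{Z}}$ with $t_i \to \sup \zeta$ as $i \to \infty$ and $t_i \to 0$ as $i \to -\infty$, satisfying $|\zeta(t_{i+1}) - \zeta(t_i)| \leq \eta$ and $\lambda(\{x \in C : \phi(v_k;x) = t_i\}) = 0$ for all $k \geq 0$ and all $i \in \mathbb{Z}$. Setting $C_{li} = \{x \in C_l : t_i \leq \phi(v;x) \leq t_{i+1}\}$, each $C_{li}$ is compact by continuity of $\phi(v;\cdot)$ on $C_l$, and the weak$^*$-convergence bound \eqref{weak-star} yields $\limsup_k \int_{C_{li}} \phi(v_k;x) \d\lambda(x) \leq \int_{C_{li}} \phi(v;x) \d\lambda(x) \leq t_{i+1}\, \lambda(C_{li})$. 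Applying Jensen's inequality (reversed since $\zeta$ is convex), monotonicity of $\zeta$ (which is decreasing), and the counting-measure Fatou inequality \eqref{fatou} together with classical Fatou exactly as in the proof of Theorem \ref{lower} (but with all factors $\omega(\cdot, v_k(\cdot))$ and $\omega(\cdot, v(\cdot))$ replaced by $1$), I obtain
\begin{equation*}
\liminf_{k\to\infty} \int_{C_l} \zeta(\phi(v_k;x)) \d\lambda(x) \;\geq\; \int_{C_l} \zeta(\phi(v;x)) \d\lambda(x) - \eta\, \lambda(C_l),
\end{equation*}
and letting $\eta \to 0$ completes the local step.

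Finally, I would sum over $l$ and apply Fatou's lemma once more to obtain the global inequality $\liminf_k \oZ(v_k, C) \geq \oZ(v, C)$. There is no real obstacle here: the only place in the proof of Theorem \ref{lower} where continuity of the $v_k$ was invoked was to obtain uniform continuity of $\omega(\cdot, v_k(\cdot))$ on $C_{li}$, which is vacuous when $\omega \equiv 1$. Accordingly, the present statement is best regarded as a remark rather than a new theorem, and a clean write-up would consist of noting that the entire preceding argument goes through mutatis mutandis.
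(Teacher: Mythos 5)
Your proposal is correct and matches the paper's own approach exactly: the paper treats Theorem \ref{lower2} as an immediate consequence of the proof of Theorem \ref{lower}, noting (in the sentence preceding the statement) that the continuity of $v_k$ was used only to ensure that $\omega(\cdot,v_k(\cdot))$ is continuous on the sets $C_{li}$, a condition that is vacuous when $\omega\equiv 1$.
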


\section{Proof of Theorem \ref{teo1p} and Further Applications}\label{pthm}

We obtain the following result as special case of Theorem \ref{mainb}.

\begin{teo}\label{coro}
For $\zeta\in \conc$ and $v\in  \Conv_{\MA}\finite$, 
\begin{align*}
    \oZ(v)=\int_{\mathbb{R}^n} \zeta(\det(\Hess v(x)))\d x
\end{align*} 
is finite, and
\begin{equation}
    \oZ(v)\ge \limsup_{k\to \infty} \oZ(v_k)
\end{equation}
for every sequence  $v_k$ in $\Conv_{\MA}\finite$ that $\tau^*$-converges to $v$.
\end{teo}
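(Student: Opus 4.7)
The plan is to deduce Theorem \ref{coro} as a direct specialization of Theorem \ref{mainb}. I would take the ambient metric space to be $(X,\lambda) = (\mathbb{R}^n, V_n)$ with $V_n$ the $n$-dimensional Lebesgue measure, the function space $\cF(X) = \Conv\finite$ equipped with epi-convergence, and the family of Radon measures $\Phi(v;\cdot) = \MA(v;\cdot)$ defined via \eqref{MAdef}. Under these choices, $\cF_{\Phi}(X)$ coincides by definition with $\Conv_{\MA}\finite$, and the hypothesis $v_k \dseq v$ translates verbatim into the hypotheses of Theorem \ref{mainb}: epi-convergence of $v_k$ to $v$ in $\Conv\finite$ together with a common compact bound on the supports of $\MA(v_k;\cdot)$ and $\MA(v;\cdot)$.

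To legitimately invoke Theorem \ref{mainb}, two classical analytic facts must be verified. First, the weak$^*$-continuity of the Monge--Ampère operator $v \mapsto \MA(v;\cdot)$ under epi-convergence on $\Conv\finite$. Since epi-convergence coincides with locally uniform convergence on $\Conv\finite$ by \cite[Theorem 7.17]{RockafellarWets}, this is the standard weak$^*$-continuity property of Monge--Ampère measures already exploited by Trudinger and Wang in \cite[Lemma 6.4]{Trudinger:Wang2000}. Second, the identification of the density $\phi(v;x)$ of the absolutely continuous part of $\MA(v;\cdot)$ with $\det(\Hess v(x))$ at $\lambda$-almost every $x$, which follows from Aleksandrov's theorem on the almost-everywhere twice differentiability of convex functions together with the change-of-variables formula applied to the gradient map $\nabla v$; this is precisely the identification recalled in the introduction after \eqref{ab}.

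With these two ingredients in place, Theorem \ref{mainb} applied to $\zeta \in \conc$ delivers both the finiteness of
\begin{align*}
\oZ(v) = \int_{\mathbb{R}^n} \zeta(\det(\Hess v(x)))\d x
\end{align*}
for every $v \in \Conv_{\MA}\finite$ and the upper-semicontinuity inequality $\oZ(v) \ge \limsup_{k\to\infty} \oZ(v_k)$ along every $\tau^*$-convergent sequence, which is the desired conclusion. The main substantive point is thus the verification of the weak$^*$-continuity of the Monge--Ampère operator and the density identification; all remaining work is bookkeeping that translates the concrete Monge--Ampère setting into the abstract framework of Theorem \ref{mainb}.
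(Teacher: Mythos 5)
Your proposal is correct and takes essentially the same route as the paper, which simply states that Theorem \ref{coro} is a special case of Theorem \ref{mainb} with the implicit choices $(X,\lambda)=(\R^n,V_n)$, $\Phi=\MA$, and $\phi(v;\cdot)=\det(\Hess v(\cdot))$. You have merely made explicit the two analytic ingredients (weak$^*$-continuity of $\MA$ under epi-convergence and the Aleksandrov density identification) that the paper treats as background; this is consistent with what the paper itself recalls in the introduction and Section 2.
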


A version of Theorem \ref{coro} also applies to 
$\Conv_{\MAn}(\mathbb{R}^n;\mathbb{R})$, which implies by  \eqref{conjugate} and Lemma \ref{duality} that Theorem \ref{teo1p} holds. 
We apply Theorem \ref{mainb} also to the weighted functional affine surface area from~\cite{STTW} and remark that more general weighted affine surface areas \cite{LSW,STTW} are also covered by Theorem~\ref{mainb} on $\Conv_{\MA}\finite$.

\begin{teo}\label{corow}
The functional $\oZ\colon\Conv_{\MA}\finite\to [0, \infty)$, defined by
\begin{align*}
    \oZ(v)=\int_{\mathbb{R}^n} \det(\Hess v(x))^{\frac{1}{n+2}} e^{-\frac n{n+2}v(x)}\d x,
\end{align*} 
is $\tau^*$-upper semicontinuous.
\end{teo}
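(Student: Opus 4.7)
The plan is to derive Theorem \ref{corow} as a direct instance of Theorem \ref{main}. I would set $X=\R^n$ equipped with Lebesgue measure $\lambda$, take $\cF(X)=\Conv\finite$ endowed with epi-convergence (equivalently, locally uniform convergence on $\R^n$), and let $\Phi(v;\cdot)=\MA(v;\cdot)$ be the Monge--Amp\`ere measure. With these choices, the Radon--Nikod\'ym density $\phi(v;x)$ of the absolutely continuous part equals $\det(\Hess v(x))$ by Aleksandrov's theorem together with the identity $\MA^a(v;A)=\int_A\det(\Hess v(x))\,\mathrm{d}x$. By construction, $\F$ then coincides exactly with $\Conv_{\MA}\finite$.

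Next, I would take $\zeta(t)=t^{\frac{1}{n+2}}$ and $\omega(x,s)=e^{-\frac{n}{n+2}s}$, and check the hypotheses of Theorem \ref{main}. The function $\zeta$ lies in $\conc$: it is concave and non-negative on $[0,\infty)$, $\zeta(0)=0$, and $\zeta(t)/t=t^{-\frac{n+1}{n+2}}\to 0$ as $t\to\infty$. The weight $\omega$ is non-negative and continuous on $\R^n\times\R$. Every $v\in\Conv_{\MA}\finite\subset\Conv\finite$ is finite-valued and convex, hence continuous on $\R^n$, so $\Conv_{\MA}\finite\subset\F\cap C(X)$. With these identifications, the functional $\oZ(v)$ in Theorem \ref{corow} is precisely the functional appearing in Theorem \ref{main}, so its finiteness on $\Conv_{\MA}\finite$ is immediate from that theorem.

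For the semicontinuity statement, I would unpack $\tau^*$-convergence and match it against the hypotheses of Theorem \ref{main}. By definition, $v_k\dseq v$ means that $v_k$ epi-converges to $v$ (which is convergence in $\cF(X)$ in our setup) and that the supports of $\MA(v_k;\cdot)$ and $\MA(v;\cdot)$ all sit in a fixed compact set, which is exactly the "uniformly bounded supports" condition of Theorem \ref{main}. Applying that theorem therefore delivers $\oZ(v)\ge\limsup_{k\to\infty}\oZ(v_k)$, completing the argument.

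The only technical prerequisite feeding into this reduction is the weak$^*$-continuity of $\MA(v;\cdot)$ with respect to epi-convergence of real-valued convex functions; since epi-convergence on $\Conv\finite$ coincides with locally uniform convergence, this is the classical stability theorem for Monge--Amp\`ere measures and is precisely the hypothesis on $\Phi$ that Theorem \ref{main} is built to exploit. No further genuinely new computation is needed; the proof reduces entirely to verifying that all structural assumptions of Theorem \ref{main} are met by the specific choices $\zeta(t)=t^{\frac{1}{n+2}}$ and $\omega(x,s)=e^{-\frac{n}{n+2}s}$.
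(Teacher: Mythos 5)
Your reduction to Theorem \ref{main} is essentially the right move and matches the paper's intent, though I note two points.

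First, the paper's text says ``We apply Theorem \ref{mainb}'' to obtain Theorem \ref{corow}; but Theorem \ref{mainb} has no weight $\omega$, while the functional in Theorem \ref{corow} carries the weight $e^{-\frac{n}{n+2}v(x)}$. The correct reference is the weighted Theorem \ref{main}, which is exactly what you invoke, so your choice is the right one (the paper's citation appears to be a slip). Your verification that $\zeta(t)=t^{1/(n+2)}\in\conc$ and that $\omega(x,s)=e^{-ns/(n+2)}$ is non-negative and continuous is correct, as is the identification $\phi(v;\cdot)=\det(\Hess v(\cdot))$ for $\Phi=\MA$, and the matching of $\tau^*$-convergence to the hypotheses (epi-convergence in $\Conv\finite$ plus uniformly bounded supports of the $\MA$ measures).

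Second, one overstatement that should be corrected but does not damage the argument: you claim $\F$ coincides exactly with $\Conv_{\MA}\finite$. In fact the inclusion $\Conv_{\MA}\finite\subset\F\cap C(\R^n)$ can be strict. The set $\Conv_{\MA}\finite$ consists only of functions of the form $h_K(\cdot,-1)$ with $K\in\cK^{n+1}$, i.e.\ finite convex functions of linear growth whose $\MA$ measure is compactly supported. There are finite convex functions with compactly supported (even vanishing) Monge--Amp\`ere measure that do not have linear growth; for instance $v(x_1,x_2)=x_1^2$ on $\R^2$ satisfies $\MA(v;\cdot)=0$, so $v\in\F\cap C(\R^2)$, yet $v$ is not of the form $h_K(\cdot,-1)$ for any $K\in\cK^{3}$. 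Since Theorem \ref{main} applies to all of $\F\cap C(X)$ and only the inclusion $\Conv_{\MA}\finite\subset\F\cap C(X)$ is used, the conclusion still follows; you should just state the inclusion rather than equality.
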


The following result is a special case of Theorem~\ref{lower2}.

\begin{teo}\label{coro3}
Let   $\zeta\in \cvx$ and $C\subset \R^n$ compact.   Define 
$$
   \oZ(v, C)=\int_{C}\zeta(\det(\Hess v(x)))\d x$$
for $v\in\Conv_{\MA}\finite$
(as an element of $[0, \infty]$).
If $v\in\Conv_{\MA}\finite$,  then
\begin{equation}
    \oZ(v,C)\le \liminf_{k\to \infty} \oZ(v_k,C)
\end{equation}
for every sequence  $v_k$ in $\Conv_{\MA}\finite$ that $\tau^*$-converges to $v$.
\end{teo}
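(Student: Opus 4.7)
The proof is a direct specialization of Theorem \ref{lower2} to the Monge--Amp\`ere setting. My plan is to identify the abstract data introduced in Section \ref{tools} with the concrete objects of Theorem \ref{coro3}, verify the hypotheses (Radon measure depending weak$^*$-continuously on the function, identification of the absolutely continuous density), and then invoke Theorem \ref{lower2} directly.

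Concretely, I would take $X=\R^n$ equipped with Lebesgue measure $\lambda$, set $\cF(X)=\Conv_{\MA}\finite$ with its $\tau^*$-convergence, and let $\Phi(v;\cdot)=\MA(v;\cdot)$ be the Monge--Amp\`ere measure defined in \eqref{MAdef}. By \eqref{MALip}, every $v\in\Conv_{\MA}\finite$ is finite-valued on $\R^n$, so $\cF(X)$ is indeed a family of real-valued functions on $X$. By \eqref{MAdef}, $\MA(v;\cdot)$ is a Radon measure on $\R^n$, and by the very definition of $\Conv_{\MA}\finite$, its support is compact.

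The main input to verify is weak$^*$-continuity. If $v_k\dseq v$ in $\Conv_{\MA}\finite$, then in particular $v_k$ epi-converges to $v$; since all functions involved are finite on $\R^n$, epi-convergence coincides with locally uniform convergence by \cite[Theorem 7.17]{RockafellarWets}. It is a classical theorem that the Monge--Amp\`ere operator is weak$^*$-continuous along locally uniformly convergent sequences of finite convex functions, which gives weak$^*$-convergence $\MA(v_k;\cdot)\to\MA(v;\cdot)$ in the sense required in Section \ref{tools}. Next, by Aleksandrov's theorem $v$ is twice differentiable $\lambda$-a.e., and a standard change-of-variables argument (which is precisely the computation underlying \eqref{conjugate}) identifies $\det(\Hess v(x))$ as the Radon--Nikod\'ym density of the absolutely continuous part $\MA^a(v;\cdot)$ with respect to $\lambda$. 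Thus $\phi(v;x)=\det(\Hess v(x))$ for $\lambda$-almost every $x$.

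With these identifications, the functional $\oZ(v,C)=\int_C\zeta(\det(\Hess v(x)))\d x$ becomes exactly the functional $\int_C\zeta(\phi(v;x))\d\lambda(x)$ to which Theorem \ref{lower2} applies, and the inequality $\liminf_{k\to\infty}\oZ(v_k,C)\ge \oZ(v,C)$ follows at once. The only step that requires substantive input beyond unpacking definitions is the weak$^*$-continuity of $\MA$, which is the classical fact alluded to in the introduction (cf.\ \cite{Trudinger:Wang2000}); once that is in hand, everything else is a routine matching of hypotheses.
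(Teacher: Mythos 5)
Your proposal is correct and takes essentially the same route as the paper: the paper simply states that Theorem~\ref{coro3} is a special case of Theorem~\ref{lower2}, and your write-up spells out the identifications ($X=\R^n$, $\lambda$ Lebesgue measure, $\Phi=\MA$, $\phi(v;\cdot)=\det(\Hess v(\cdot))$ via Aleksandrov and the Lebesgue decomposition, weak$^*$-continuity of $\MA$ under epi-convergence) needed to make that specialization precise.
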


We remark that instead of Monge--Ampère measures mixed Monge--Ampère measures and, in particular, Hessian measures can be used here. Such measures also appear in the characterization of continuous valuations on convex functions \cite{Colesanti-Ludwig-Mussnig-7}.

\goodbreak

\subsection*{Acknowledgments}
The authors would like to thank Fabian Mussnig for his careful reading of the
manuscript and  fruitful discussions.
This research was funded in part by the Austrian Science Fund (FWF) Grant-DOI $\colon$ 10.55776/P37030. 
For open access purposes, the authors have applied a CC BY public copyright license to any author accepted manuscript version arising from this submission.

\bibliography{usc}
\bibliographystyle{amsplain}
\end{document}